\theoremstyle{plain}
\newtheorem{lemma}{Lemma}[section]
\newtheorem{prop}[lemma]{Proposition}
\newtheorem{theo}[lemma]{Theorem}
\newtheorem{coro}[lemma]{Corollary}
\theoremstyle{remark}
\newtheorem{rem}[lemma]{Remark}
\theoremstyle{definition}
\newtheorem{definition}[lemma]{Definition}
\newtheorem{ex}[lemma]{Example}
\def\hom{\mathrm{Hom}}
\def\cn{\mathrm{C}}
\def\im{\mathrm{Im}\:}
\def\N{\mathbb{N}}
\def\Z{\mathbb{Z}}
\def\ext{\mathrm{Ext}}
\begin{document}

\title{$A$-homology, $A$-homotopy and spectral sequences}

\author[E.M. Ottina]{Enzo Miguel Ottina}

\address{Enzo Miguel Ottina. Instituto de Ciencias B\'asicas \\
 Universidad Nacional de Cuyo \\ Mendoza, Argentina.}

\email{emottina@uncu.edu.ar}

\begin{abstract}
\noindent
Given a CW-complex $A$ we define an `$A$-shaped' homology theory which behaves nicely towards $A$-homotopy groups allowing the generalization of many classical results. We also develop a relative version of the Federer spectral sequence for computing $A$-homotopy groups. As an application we derive a generalization of the Hopf-Whitney theorem.
\end{abstract}

\subjclass[2000]{55N35, 55Q05, 55T05.}

\keywords{CW-complexes, Homology Theories, Homotopy Groups, Spectral Sequences.}

\maketitle

\section{Introduction} \label{intro}

Given pointed topological spaces $A$ and $X$, the $A$-homotopy groups of $X$ are defined as $\pi_n^A(X)=[\Sigma^n A,X]$, that is, the homotopy classes of pointed maps from the reduced $n$-th suspension of $A$ to $X$. These groups appear naturally in different situations, for example in Quillen's model categories \cite{Qui}, and generalize the homotopy groups with coefficients \cite{Nei,Pet}.

The $A$-homotopy groups have also been studied indirectly by some authors as homotopy groups of function spaces. Among them, we mention M. Barratt, R. Brown and H. Federer. The first one proves in \cite{Bar1} that homotopy groups of function spaces can be described as a central extension of certain homology groups, and computes in \cite{Bar2} these homotopy groups in several cases using Whitney's tube systems \cite{Whn}. In \cite{Br}, Brown works in a simplicial setting obtaining results which are used to study homotopy types of function spaces. As an application, he obtains different proofs for some of Barratt's results. In \cite{Fed}, Federer introduces  a spectral sequence which converges to the homotopy groups of function spaces. Clearly, the Federer spectral sequence may also be understood as a tool to compute $A$-homotopy groups when $A$ is locally compact and Hausdorff.

In the first part of this article we delve deeply into this spectral sequence taking a different approach to that of Federer's: we focus our attention on $A$-homotopy groups of spaces rather than on homotopy groups of function spaces. We develop a relative version of the Federer spectral sequence and obtain as a first application a generalization of the Hopf-Whitney theorem (\ref{Hopf-Whitney}).

The homotopy groups and the homology groups of a topological space are related, for example, by the Hurewicz theorem, or more generally, by the Whitehead exact sequence \cite{Whi}. Therefore, it is natural to think that the $A$-homotopy groups should also have their homological counterpart. The main objective of the second part of this article is to define a suitable `$A$-shaped' homology theory and give results which show the relationship between this homology theory and the $A$-homotopy groups. This is achieved in section \ref{section_A_homology} where, given a CW-complex $A$, we define the $A$-homology groups of a CW-complex $X$ generalizing singular homology groups. We obtain many generalization of classical results, among the most important of which we mention a Hurewicz-type theorem relating the $A$-homotopy groups with the $A$-homology groups (\ref{Hurewicz_for_A-homology}) and a homological version of the Whitehead theorem which states that, under certain hypotheses, a map between CW-complexes which induces isomorphisms in the $A$-homology groups is a homotopy equivalence (\ref{A-homology_isomorphism}).

Finally, we define a Hurewicz-type map between the $A$-homotopy groups and the $A$-homology groups and embed it in a long exact sequence generalizing the exact sequence constructed by Whitehead in \cite{Whi}.

\medskip

Throughout this article, all spaces are supposed to be pointed and path-connected. Homology and cohomology will mean reduced homology and reduced cohomology respectively. Also, if $X$ is a pointed topological space with base point $x_0$, we will simply write $\pi_n(X)$  instead of $\pi_n(X,x_0)$.

\medskip

I would like to thank G. Minian for many valuable comments and suggestions on this article

\section{A relative version of the Federer spectral sequence} \label{section_Federer}

In this section we introduce a relative version of the Federer spectral sequence which will be used later. As one of its first applications, we will obtain a relative version of the Hopf-Whitney theorem.

Recall that the $A$-homotopy groups of a (pointed) topological space $X$ are defined by $\pi_n^A(X)=[\Sigma^n A,X]$, that is the (pointed) homotopy classes of maps from $\Sigma^n A$ to $X$. Similarly, the relative $A$-homotopy groups of a (pointed) topological pair $(Y,B)$ are defined by $\pi_n^A(Y,B)=[(\cn \Sigma^{n-1} A,\Sigma^{n-1} A),(Y,B)]$.

Now we state and prove the main result of this section.

\begin{theo} \label{theo_relative_Federer_Spectral_Sequence}
Let $(Y,B)$ be a topological pair such that $\pi_2(Y,B)$ is an abelian group and let $A$ be a finite-dimensional (and path-connected) CW-complex. Then there exists a homological spectral sequence $\{E^a_{p,q}\}_{a\geq 1}$, with $E^2_{p,q}$ satisfying
\begin{itemize}
\item $E^2_{p,q}\cong H^{-p}(A;\pi_q(Y,B))$ for $p+q\geq 2$ and $p\leq -1$.
\item $E^2_{p,q}$ is isomorphic to a subgroup of $H^{-p}(A;\pi_{q}(Y,B))$ if $p+q=1$ and $p\leq -1$.
\item $E^2_{p,q}=0$ if $p+q\leq 0$ or $p\geq 0$.
\end{itemize}
which converges to $\pi_{p+q}^A(Y,B)$ for $p+q\geq 2$.
\end{theo}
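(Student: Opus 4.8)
The plan is to realize $\{E^a_{p,q}\}_{a\ge 1}$ as the spectral sequence of the exact couple attached to the skeletal filtration of $A$. Since $A$ is path-connected we may choose a CW-structure on it with a single $0$-cell, so that $\ast=A^{(0)}\subseteq A^{(1)}\subseteq\cdots\subseteq A^{(d)}=A$ with $d=\dim A<\infty$, and each inclusion $A^{(s-1)}\hookrightarrow A^{(s)}$ is a cofibration with cofiber $A^{(s)}/A^{(s-1)}\cong\bigvee_{J_s}S^s$, where $J_s$ indexes the $s$-cells of $A$. It is convenient to replace the pair $(Y,B)$ by the homotopy fiber $F$ of the inclusion $B\hookrightarrow Y$: for any subcomplex $K\subseteq A$ a map of pairs $(\cn\Sigma^{n-1}K,\Sigma^{n-1}K)\to(Y,B)$ is the same datum as a based map $\Sigma^{n-1}K\to F$, so $\pi_n^K(Y,B)\cong[\Sigma^{n-1}K,F]$. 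Applying $[-,F]$ to the Puppe sequence of the cofibration $\Sigma^{n-1}A^{(s-1)}\hookrightarrow\Sigma^{n-1}A^{(s)}$ and reindexing then produces, for each $s$, a long exact sequence
\begin{equation*}
\cdots\to\pi_{n}^{A^{(s)}/A^{(s-1)}}(Y,B)\to\pi_{n}^{A^{(s)}}(Y,B)\to\pi_{n}^{A^{(s-1)}}(Y,B)\to\pi_{n-1}^{A^{(s)}/A^{(s-1)}}(Y,B)\to\cdots
\end{equation*}
and, since $\Sigma^{n-1}\bigl(\bigvee_{J_s}S^s\bigr)=\bigvee_{J_s}S^{n+s-1}$, one has $\pi_n^{A^{(s)}/A^{(s-1)}}(Y,B)\cong\prod_{J_s}\pi_{n+s}(Y,B)$, which is precisely the group $C^{s}(A;\pi_{n+s}(Y,B))$ of cellular $s$-cochains.

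Splicing these long exact sequences together over $s=1,\dots,d$ gives an exact couple, which I would organize with $D^1_{p,q}=\pi_{p+q}^{A^{(-p)}}(Y,B)$ and $E^1_{p,q}=\pi_{p+q}^{A^{(-p)}/A^{(-p-1)}}(Y,B)\cong C^{-p}(A;\pi_q(Y,B))$; with this indexing the three structure maps of the couple have bidegrees $(1,-1)$, $(-1,0)$ and $(0,0)$, so that $d_1\colon E^1_{p,q}\to E^1_{p-1,q}$ is a homomorphism $C^{-p}(A;\pi_q(Y,B))\to C^{-p+1}(A;\pi_q(Y,B))$ given by a connecting map followed by a cofiber-inclusion map. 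The key computation is to identify this $d_1$ with the cellular coboundary; this is done exactly as in the classical (absolute) Federer spectral sequence, by unwinding the two maps across three consecutive skeleta and recognizing the dual of the cellular boundary of $A$. Granting it, $E^2_{p,q}\cong H^{-p}(A;\pi_q(Y,B))$ throughout the main range, while the vanishing clauses are immediate: $C^{-p}(A;-)=0$ for $p\ge 0$ since $A$ has a single $0$-cell, and $E^1_{p,q}=0$ for $p+q\le 0$ since it is then $\pi_m$ of a pair with $m\le 0$.

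Because $\dim A<\infty$ the exact couple has only finitely many nonzero columns, so the spectral sequence degenerates at a finite page and converges, the filtration on $\pi_{p+q}^{A}(Y,B)=\pi_{p+q}^{A^{(d)}}(Y,B)$ being the one induced by the restriction maps $\pi_{\bullet}^{A^{(d)}}(Y,B)\to\pi_{\bullet}^{A^{(s)}}(Y,B)$; identifying the associated graded with $E^\infty$ is then formal. The genuine difficulty --- and the source both of the hypothesis that $\pi_2(Y,B)$ be abelian and of the fact that convergence is claimed only in total degree $\ge 2$ --- lies along the line $p+q=1$: there the objects $\pi_1^{A^{(s)}}(Y,B)$ appearing in the long exact sequences are merely pointed sets, and the relative group $\pi_2(Y,B)$ is not abelian in general, so the exact-couple machinery returns abelian groups (and the target $\pi_{p+q}^A(Y,B)$ is itself an abelian group carrying the filtration) only once $p+q\ge 2$. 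Assuming $\pi_2(Y,B)$ abelian turns the relevant maps along $p+q=1$ into honest homomorphisms, and a careful chase at the bottom of the couple then exhibits $E^2_{p,q}$, for $p+q=1$, as isomorphic to a subgroup of $H^{-p}(A;\pi_q(Y,B))$. This low-dimensional bookkeeping is where the real work is; above total degree $2$, once the $d_1$-computation is in hand, everything is formal.
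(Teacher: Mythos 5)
Your overall strategy coincides with the paper's: filter by the skeleta of $A$, splice the resulting long exact sequences into an exact couple, identify $E^1_{p,q}$ with the cellular cochain group $C^{-p}(A;\pi_q(Y,B))$ and $d^1$ with the cellular coboundary, and use $\dim A<\infty$ for convergence; your passage to the homotopy fiber $F$ of $B\hookrightarrow Y$ is only a cosmetic repackaging of the relative homotopy sets, and the $d^1=d^\ast$ computation you take for granted is a genuine (if standard) degree argument that the paper carries out explicitly.

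The genuine gap is at the bottom of the couple, which you acknowledge but defer to ``a careful chase'', and that is precisely where the paper's real work lies. As written, your couple $D^1_{p,q}=\pi_{p+q}^{A^{(-p)}}(Y,B)$ is undefined for $p+q\le 0$ and is only a pointed set for $p+q=1$ (since $A^{(s)}$ is not a suspension, $[A^{(s)},F]$ has no group structure), so the exact-couple machinery cannot be invoked until you specify what replaces these terms. Your vanishing claim that $E^1_{p,q}=0$ for $p+q\le 0$ ``since it is then $\pi_m$ of a pair with $m\le 0$'' is not available: those terms are not zero groups, they are simply undefined, and if you brutally truncate the couple there you lose exactness along $p+q=1$; then $E^2_{p,q}$ in total degree $1$ comes out as a quotient of $C^{-p}(A;\pi_q(Y,B))$ by $\mathrm{Im}\: d^\ast$ rather than a subgroup of $H^{-p}(A;\pi_q(Y,B))$, and the identification of $E^\infty$ with the associated graded in total degree $2$ is no longer formal. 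The paper instead \emph{extends} each long exact sequence past degree $1$ by the abelian groups $\pi_1^{Z_r}(Y,B)/\mathrm{Im}\:\partial_r$ (inserted via the quotient map, a zero map and an identity), which produces an honest exact couple of abelian groups --- this is exactly where the hypothesis that $\pi_2(Y,B)$ be abelian enters --- and with that construction one gets, for $p+q=1$, $\ker d^1_{p,q}=\mathrm{Im}\:\partial_{-p}=\ker(\overline{q}_{-p})^{\ast}\subseteq\ker d^\ast$, whence $E^2_{p,q}=\mathrm{Im}\:\partial_{-p}/\mathrm{Im}\: d^\ast$ is a subgroup of $H^{-p}(A;\pi_q(Y,B))$, while $E^2$ vanishes for $p+q\le 0$ and convergence holds for $p+q\ge 2$. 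Without supplying such a construction, the second and third bullet points of the theorem and the convergence claim in degree $2$ remain unproved in your proposal.
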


We will call $\{E^a_{p,q}\}_{a\geq 1}$ the \emph{relative Federer spectral sequence associated to $A$ and $(Y,B)$}.

\begin{proof}
We may suppose that $A$ has only one $0$-cell. For $r\leq -1$, let $A^r=\ast$, and for $r\in\N$, let $J_r$ be an index set for the $r$-cells of $A$. For $\alpha \in J_r$ let $g^r_\alpha$ be the attaching map of the cell $e^r_\alpha$.

For $r\in\mathbb{N}$ let $\displaystyle Z_r= \bigvee_{J_r}S^r \cong A^r/A^{r-1}$. The long exact sequences associated to the cofiber sequences $\displaystyle A^{r-1}\to A^r \overset{\overline{q}_r}{\to} Z_r$, $r\in\mathbb{N}$, may be extended as follows
\begin{displaymath}
\xymatrix@C=12pt{\ldots \ar[r] & \pi^{A^{r-1}}_2(Y,B) \ar[r]^(.53){\partial_r} & \pi^{Z_r}_1(Y,B) \ar[r]^(.48){\eta}  & \dfrac{\pi^{Z_r}_1(Y,B)}{\im\partial_r} \ar[r]^(.45){0} & \dfrac{\pi^{Z_{r-1}}_1(Y,B)}{\im\partial_{r-1}} \ar[r]^{\textrm{id}} & \dfrac{\pi^{Z_{r-1}}_1(Y,B)}{\im\partial_{r-1}} \ar[r] & 0}
\end{displaymath}
where $\eta$ is the quotient map.

These extended exact sequences yield an exact couple $(A_0,E_0,i,j,k)$ where the bigraded groups $\displaystyle A_0 = \bigoplus_{p,q \in \Z} A^1_{p,q}$ and $\displaystyle E_0 = \bigoplus_{p,q \in \Z} E^1_{p,q}$ are defined by
\begin{displaymath}
A^1_{p,q}=\left\{\begin{array}{ll}\pi_{p+q+1}^{A^{-p-1}}(Y,B) & \textrm{if $p+q\geq 1$} \\ \pi_1^{Z_{-p-1}}(Y,B)/\im \partial_{-p-1} & \textrm{if $p+q=0$} \\  0 & \textrm{if $p+q\leq -1$} \end{array}\right.
\end{displaymath}
and
\begin{displaymath}
E^1_{p,q}=\left\{\begin{array}{ll}\pi_{p+q}^{Z_{-p}}(Y,B) & \textrm{if $p+q\geq 1$} \\ \pi_1^{Z_{-p-1}}(Y,B)/\im \partial_{-p-1} & \textrm{if $p+q=0$} \\  0 & \textrm{if $p+q\leq -1$} \end{array}\right.
\end{displaymath}

Note that all these groups are abelian, except perhaps for $\pi_2^{A^r}(Y,B)$, $r\in\N$. Hence, $E_0$ is an abelian group. Therefore, the exact couple $(A_0,E_0,i,j,k)$ induces a spectral sequence $(E^a_{p,q})_{p,q}$, $a\geq 1$, which converges to $\pi_n^A(Y,B)$ for $n\geq 2$ since $A$ is finite-dimensional.

Note also that 
$$\displaystyle E^1_{p,q} = \pi_{p+q}^{Z_{-p}}(Y,B) = \prod_{J_{-p}} \pi_q(Y,B) \cong C^{-p}(A;\pi_q(Y,B))$$
for $p+q\geq 1$ and $p\leq -1$, where $C^{\ast}(A;\pi_q(Y,B))$ denotes the cellular cohomology complex of $A$ with coefficients in $\pi_q(Y,B)$.

The isomorphism $\gamma:E^1_{p,q}=\pi_{p+q}^{Z_{-p}}(Y,B)\to C^{-p}(A;\pi_q(Y,B))$ is given by $$\gamma([f])(e_\alpha^{-p})=[f\circ\cn\Sigma^{p+q-1}i_\alpha]$$
where $i_\alpha:S^{-p}\to Z_{-p}$ denotes the inclusion in the $\alpha$-th copy of $S^{-p}$. Note also that $E^2_{p,q}=0$ if $p+q\leq 0$ or $p\geq 0$.

We wish to prove now that $E^2_{p,q}\cong H^{-p}(A;\pi_q(Y,B))$ for $p+q\geq 2$ and $p\leq -1$. 

We consider the morphism $\delta:E^1_{p,q}\cong C^{-p}(A;\pi_q(Y,B))\to E^1_{p-1,q}\cong C^{-p+1}(A;\pi_q(Y,B))$ coming from the spectral sequence. We will prove that $\delta=d^\ast$ for $n= p+q\geq 2$ and $p\leq -1$, where $d^\ast$ is the cellular boundary map. This is equivalent to saying that the following diagram commutes
\begin{displaymath}
\xymatrix{\pi^{Z_{p'}}_{n}(Y,B) \ar[r]^{(\overline{q}_{p'})^\ast} \ar[d]^\cong_\gamma & \pi^{A^{p'}}_{n}(Y,B) \ar[r]^(.45){(\underset{J_{p'+1}}{+} g_\beta^{p'+1})^\ast } &  \pi^{Z_{p'+1}}_{n-1}(Y,B) \ar[d]^\cong_\gamma \\ C^{p'}(A;\pi_{n+p'}(Y,B)) \ar[rr]^{d^\ast} & & C^{p'+1}(A;\pi_{n+p'}(Y,B)) }
\end{displaymath}
where $p'=-p$.

If $[h]\in \pi^{Z_{p'}}_{n}(Y,B)$ and $e_\alpha^{p'+1}$ is a $(p'+1)$-cell of $A$, then
\begin{displaymath}
\begin{array}{rcl} \left(\gamma(\underset{\beta\in J_{p'+1}}{+} g_\beta^{p'+1})^\ast q^\ast(h)\right)(e_\alpha^{p'+1}) & = & \gamma(h\cn\Sigma^{n-1} q(\underset{\beta\in J_{p'+1}}{+} \cn\Sigma^{n-1} g_\beta^{p'+1}))(e_\alpha^{p'+1}) = \\ & = & [h\cn\Sigma^{n-1} q\cn\Sigma^{n-1} g_\alpha^{p'+1}]. \end{array}
\end{displaymath}

On the other hand,
\begin{displaymath}
\begin{array}{rcl}
\displaystyle d^{\ast}(\gamma([h]))(e_\alpha^{p'+1}) & = & \displaystyle (\gamma([h]))(d(e_\alpha^{p'+1})) = \sum_{\beta\in J_{p'}}\deg(q_\beta g^{p'+1}_\alpha)(\gamma([h]))(e_\beta^{p'})= \\  & = & \displaystyle \sum_{\beta\in J_{p'}}\deg(q_\beta g^{p'+1}_\alpha)[h\cn \Sigma^{n-1} i_\beta]
\end{array}
\end{displaymath}
where $q_\beta:A^{p'}\to S^{p'}$ is the map that collapses $A^{p'}-e^{p'}_\beta$ to a point.

Let $r=n-1+p'$. Since the morphism $\displaystyle \bigoplus_{\beta\in J_{p'}} (\Sigma^{n-1} q_\beta)_\ast$ is the inverse of the isomorphism $\displaystyle \bigoplus_{\beta\in J_{p'}} (\Sigma^{n-1} i_\beta)_\ast:\bigoplus_{\beta\in J_{p'}}\pi_r(S^{r})\to\pi_r(\Sigma^{n-1}Z_{p'})$ we obtain that

\begin{displaymath}
\begin{array}{lcl}
[\Sigma^{n-1}\overline{q}_{p'}\Sigma^{n-1}g_{\alpha}^{p'+1}] & = & \displaystyle \bigoplus_{\beta\in J_{p'}} (\Sigma^{n-1} i_\beta)_\ast ( \bigoplus_{\beta\in J_{p'}} (\Sigma^{n-1} q_\beta)_\ast([\Sigma^{n-1}\overline{q}_{p'}\Sigma^{n-1}g_{\alpha}^{p'+1}])) = \\ & = & \displaystyle \bigoplus_{\beta\in J_{p'}} (\Sigma^{n-1} i_\beta)_\ast (\{[\Sigma^{n-1} q_\beta \Sigma^{n-1} g_{\alpha}^{p'+1}]\}_{\beta\in J_{p'}}) = \\ & = & \displaystyle \sum_{\beta\in J_{p'}}[\Sigma^{n-1} i_\beta \Sigma^{n-1} q_\beta \Sigma^{n-1} g_{\alpha}^{p'+1}].
\end{array}
\end{displaymath}

Hence,

\begin{displaymath}
\begin{array}{rcl}
[h\cn \Sigma^{n-1} \overline{q}_{p'}\cn \Sigma^{n-1} g_\alpha^{p'+1}] & = & \displaystyle h_\ast([\cn \Sigma^{n-1} \overline{q}_{p'}g_\alpha^{p'+1}]) =  h_\ast(\cn \Sigma^{n-1}(\sum_{\beta\in J_{p'}}[i_\beta q_\beta g_\alpha^{p'+1}])) =  \\ & = & \displaystyle \sum_{\beta\in J_{p'}}[h \cn \Sigma^{n-1} (i_\beta  q_\beta g_\alpha^{p'+1})] = \\ & = & \displaystyle \sum_{\beta\in J_{p'}}[h \cn \Sigma^{n-1} i_\beta][\cn \Sigma^{n-1} (q_\beta g^{p'+1}_\alpha)] = \\ & = & \displaystyle  \sum_{\beta\in J_{p'}}\deg(q_\beta g^{p'+1}_\alpha)[h\cn \Sigma^{n-1}i_\beta].
\end{array}
\end{displaymath}

It follows that $E^2_{p,q}\cong H^{-p}(A;\pi_q(Y))$ for $p+q\geq 2$ and $p\leq -1$.

The same argument works for the case $p+q=1$, $p\leq -2$, and we obtain a commutative diagram
\begin{displaymath}
\xymatrix@C=40pt{\pi^{Z_{p'}}_{1}(Y,B) \ar[r]^{(\overline{q}_{p'})^\ast} \ar[d]^\cong_\gamma & \pi^{A^{p'}}_{1}(Y,B) \ar[r]
^(.45){(\underset{J_{p'+1}}{+} g_\beta^{p'+1})^\ast } &  \pi^{\underset{J_{p'+1}}{\bigvee}S^{p'}}_{1}\!\!\!(Y,B)_{\phantom{\underset{J_{p'}}{\bigvee}S^{p'}}} \ar[d]^\cong \\ C^{p'}(A;\pi_{p'+1}(Y,B)) \ar[rr]^{d^\ast} & & C^{p'+1}(A;\pi_{p'+1}(Y,B)) \ar@{-}[u]+<0 pt,-9 pt>}
\end{displaymath}

Then $E^2_{p,q}=\ker d^1_{p,q}/\im d^1_{p+1,q}=\im \partial_{-p}/\im d^1_{p+1,q}$. By exactness, $\im \partial_{-p}=\ker q^\ast$. Thus, $\im \partial_{-p}\subseteq \ker d^\ast$ since the previous diagram commutes. Moreover, if $p\leq -2$ by the previous case the map $d^1_{p+1,q}$ coincides, up to isomorphisms, with the map $d^\ast:C^{p'-1}(A;\pi_{p'+1}(Y,B)) \to C^{p'}(A;\pi_{p'+1}(Y,B))$, and in case $p=-1$, both maps are trivial. Therefore, $E^2_{p,q}$ is isomorphic to a subgroup of $H^{-p}(A;\pi_{q}(Y,B))$ if $p+q=1$ and $p\leq -1$.
\end{proof}

Of course, applying this theorem to the topological pair $(CY,Y)$ we obtain the following absolute version, which is similar to Federer's result.

\begin{coro} \label{theo_Federer_Spectral_Sequence}
Let $Y$ be a topological space with abelian fundamental group and let $A$ be a finite-dimensional (and path-connected) CW-complex. Then there exists a homological spectral sequence $\{E^a_{p,q}\}_{a\geq 1}$, with $E^2_{p,q}$ satisfying
\begin{itemize}
\item $E^2_{p,q}\cong H^{-p}(A;\pi_q(Y))$ for $p+q\geq 1$ and $p\leq -1$.
\item $E^2_{p,q}$ is isomorphic to a subgroup of $H^{-p}(A;\pi_{q}(Y))$ if $p+q=0$ and $p\leq -1$.
\item $E^2_{p,q}=0$ if $p+q<0$ or $p\geq 0$.
\end{itemize}
which converges to $\pi_{p+q}^A(Y)$ for $p+q\geq 1$.
\end{coro}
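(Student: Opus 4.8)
The plan is to deduce Corollary \ref{theo_Federer_Spectral_Sequence} directly from Theorem \ref{theo_relative_Federer_Spectral_Sequence} by specializing the topological pair to $(CY,Y)$, where $CY$ denotes the cone on $Y$. First I would check the hypotheses: since $CY$ is contractible, the long exact sequence of the pair $(CY,Y)$ gives isomorphisms $\partial:\pi_n(CY,Y)\xrightarrow{\ \cong\ }\pi_{n-1}(Y)$ for all $n\geq 2$ (and also $\pi_1(CY,Y)\cong\pi_0(Y)=0$ in the pointed path-connected setting, while the relevant shift is what matters). In particular $\pi_2(CY,Y)\cong\pi_1(Y)$, which is abelian by hypothesis, so Theorem \ref{theo_relative_Federer_Spectral_Sequence} applies to $(CY,Y)$ and yields a spectral sequence $\{E^a_{p,q}\}_{a\geq 1}$.

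Next I would transport the indexing. The theorem gives $E^2_{p,q}\cong H^{-p}(A;\pi_q(CY,Y))$ for $p+q\geq 2$, $p\leq -1$; substituting $\pi_q(CY,Y)\cong\pi_{q-1}(Y)$ we get $E^2_{p,q}\cong H^{-p}(A;\pi_{q-1}(Y))$. To match the statement of the corollary one reindexes, setting $q'=q-1$ (equivalently, one observes that the spectral sequence computing $\pi^A_{p+q}(CY,Y)\cong\pi^A_{p+q-1}(Y)$ is, after a shift of the $q$-grading by one, a spectral sequence converging to $\pi^A_{p+q'}(Y)$ with $E^2_{p,q'}\cong H^{-p}(A;\pi_{q'}(Y))$). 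Under this shift, the range $p+q\geq 2$ becomes $p+q'\geq 1$, the boundary case $p+q=1$ becomes $p+q'=0$, and the vanishing range $p+q\leq 0$ or $p\geq 0$ becomes $p+q'<0$ or $p\geq 0$; likewise the "isomorphic to a subgroup of $H^{-p}(A;\pi_q(Y,B))$" clause becomes "isomorphic to a subgroup of $H^{-p}(A;\pi_{q'}(Y))$". Convergence to $\pi^A_{p+q}(Y,B)=\pi^A_{p+q}(CY,Y)$ for $p+q\geq 2$ becomes convergence to $\pi^A_{p+q'}(Y)$ for $p+q'\geq 1$, using the natural isomorphism $\pi^A_n(CY,Y)\cong\pi^A_{n-1}(Y)$ induced by $\partial$.

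I do not expect any genuine obstacle here; the only points requiring a word of care are bookkeeping ones. One is to make sure the isomorphism $\pi^A_n(CY,Y)\cong\pi^A_{n-1}(Y)$ is natural enough to respect the filtration giving the convergence statement — this follows because $[(\cn\Sigma^{n-1}A,\Sigma^{n-1}A),(CY,Y)]\cong[\Sigma^{n-1}A,Y]$ via the cofibration/cone adjunction exactly as in the classical long exact sequence, and the edge maps of the spectral sequence are compatible with it. The other is the degenerate boundary case $p=-1$ in the corollary ($p+q'=0$, i.e. the $E^2$ term sitting in total degree $0$), which corresponds to the case $p=-1$, $p+q=1$ of the theorem and is covered by the last clause of that statement. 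Hence:

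\begin{proof}
Apply Theorem \ref{theo_relative_Federer_Spectral_Sequence} to the pair $(CY,Y)$. Since $CY$ is contractible, the long exact sequence of the pair yields natural isomorphisms $\partial:\pi_n(CY,Y)\xrightarrow{\cong}\pi_{n-1}(Y)$ for $n\geq 2$; in particular $\pi_2(CY,Y)\cong\pi_1(Y)$ is abelian, so the theorem applies and provides a homological spectral sequence $\{\widetilde E^a_{p,q}\}_{a\geq 1}$ converging to $\pi^A_{p+q}(CY,Y)$ for $p+q\geq 2$, with $\widetilde E^2_{p,q}\cong H^{-p}(A;\pi_q(CY,Y))$ for $p+q\geq 2$, $p\leq -1$, with $\widetilde E^2_{p,q}$ isomorphic to a subgroup of $H^{-p}(A;\pi_q(CY,Y))$ for $p+q=1$, $p\leq -1$, and $\widetilde E^2_{p,q}=0$ for $p+q\leq 0$ or $p\geq 0$.

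Define $E^a_{p,q}=\widetilde E^a_{p,q+1}$. This is again a homological spectral sequence, since the differential $\widetilde d^a$ has bidegree $(-a,a-1)$ and hence so does the shifted differential. Using $\pi_{q+1}(CY,Y)\cong\pi_q(Y)$ we get $E^2_{p,q}=\widetilde E^2_{p,q+1}\cong H^{-p}(A;\pi_{q+1}(CY,Y))\cong H^{-p}(A;\pi_q(Y))$ whenever $p+(q+1)\geq 2$ and $p\leq -1$, that is, for $p+q\geq 1$ and $p\leq -1$. Likewise $E^2_{p,q}=\widetilde E^2_{p,q+1}$ is isomorphic to a subgroup of $H^{-p}(A;\pi_{q+1}(CY,Y))\cong H^{-p}(A;\pi_q(Y))$ when $p+(q+1)=1$ and $p\leq -1$, i.e. for $p+q=0$ and $p\leq -1$; and $E^2_{p,q}=\widetilde E^2_{p,q+1}=0$ when $p+(q+1)\leq 0$ or $p\geq 0$, i.e. for $p+q<0$ or $p\geq 0$.

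Finally, the natural isomorphisms $\pi^A_n(CY,Y)=[(\cn\Sigma^{n-1}A,\Sigma^{n-1}A),(CY,Y)]\cong[\Sigma^{n-1}A,Y]=\pi^A_{n-1}(Y)$, induced by $\partial$ and compatible with the exact couple defining the spectral sequence, show that $\{E^a_{p,q}\}_{a\geq 1}$ converges to $\pi^A_{p+q}(Y)$ for $p+q\geq 1$.
\end{proof}
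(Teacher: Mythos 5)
Your proposal is correct and is exactly the paper's derivation: the paper obtains the corollary by the one-line observation that one applies Theorem \ref{theo_relative_Federer_Spectral_Sequence} to the pair $(CY,Y)$, using $\pi_q(CY,Y)\cong\pi_{q-1}(Y)$ and $\pi^A_n(CY,Y)\cong\pi^A_{n-1}(Y)$. You have merely made explicit the degree shift $E^a_{p,q}=\widetilde E^a_{p,q+1}$ and the hypothesis check $\pi_2(CY,Y)\cong\pi_1(Y)$ abelian, which is fine.
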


We will call $\{E^a_{p,q}\}_{a\geq 1}$ the \emph{Federer spectral sequence associated to $A$ and $Y$}.

\bigskip

Note that the relative version of the Federer spectral sequence is natural in the following sense. If $A$ is a finite-dimensional CW-complex, $(Y,B)$ and $(Y',B')$ are topological pairs such that the groups $\pi_2(Y,B)$ and $\pi_2(Y',B')$ are abelian and $f:(Y,B)\to (Y',B')$ is a continuous map, then $f$ induces a morphism between the relative Federer spectral sequence associated to $A$ and $(Y,B)$ and the one associated to $A$ and $(Y',B')$. Indeed, $f$ induces morphisms between the extended long exact sequences of the proof above and hence a morphism between the exact couples involved, which gives rise to the morphism between the spectral sequences mentioned above.

Moreover, if $A'$ is another finite-dimensional CW-complex and $g:A\to A'$ is a cellular map, then $g$ also induces morphisms between the extended long exact sequences mentioned before and therefore, a morphism between the relative Federer spectral sequence associated to $A$ and $(Y,B)$ and the one associated to $A'$ and $(Y,B)$. If $g$ is not cellular we may replace it by a homotopic cellular map to obtain the induced morphism. Of course, by the description of the second page of our spectral sequence, the map $g$ itself will also induce the same morphism from page two onwards.

Clearly, the same holds for the absolute version.

\begin{rem} \ 

\noindent
(1) Looking at the extended exact sequences of the proof of \ref{theo_relative_Federer_Spectral_Sequence} we obtain that the relative Federer spectral sequence converges to the trivial group in degree $1$. Thus, the groups $E^2_{p,q}$, with $p+q=1$, become all trivial in $E^\infty$.

\noindent
(2) As we have mentioned above, the spectral sequence given by Federer in \cite{Fed} is similar to our absolute version. But since we work with pointed topological spaces our version enables us to compute homotopy groups of function spaces only when the base point is the constant map. However, the hypothesis we require on the space $Y$ ($\pi_1(Y)$ is abelian) are weaker than Federer's ($\pi_1(Y)$ acts trivially on $\pi_n(Y)$ for all $n\in\mathbb{N}$). Moreover, our approach in terms of $A$-homotopy groups admits the relative version given before.
\end{rem}

Just as a simple example of application of the Federer spectral sequence consider the following, which is a reformulation of a well-known result for homotopy groups with coefficients.

\begin{ex} \label{ex_A_homotopy}
If $A$ is a Moore space of type $(G,m)$ (with $G$ finitely generated) and $X$ is a path-connected topological space with abelian fundamental group, in the Federer spectral sequence we get
$$E^2_{-p,q}=\left\{\begin{array}{cl}\hom(G,\pi_q(X)) & \textrm{if $p=m$} \\  \ext(G,\pi_{q}(X)) & \textrm{if $p=m+1$} \\ 0 & \textrm{otherwise} \end{array} \right. \qquad \qquad \textrm{for $-p+q\geq 1$.}$$

Hence, from the corresponding filtrations, we deduce that, for $n\geq 1$, there are short exact sequences of groups

\begin{displaymath}
\xymatrix{0 \ar[r] & \ext(G,\pi_{n+m+1}(X)) \ar[r] & \pi_n^A(X) \ar[r] & \hom(G,\pi_{n+m}(X)) \ar[r] & 0}
\end{displaymath}

As a corollary, if $G$ is a finite group of exponent $r$ then $\alpha^{2r}=0$ for every $\alpha\in\pi_n^A(X)$. For example, if $X$ is a path-connected topological space with abelian fundamental group, then every element in $\pi_n^{\mathbb{P}^2}(X)$ ($n\geq 1$) has order 1, 2 or 4.
\end{ex}

We will now apply \ref{theo_Federer_Spectral_Sequence} to obtain an extension to the Hopf-Whitney theorem.

\begin{theo} \label{Hopf-Whitney}
Let $K$ be a path-connected CW-complex of dimension $n\geq 2$ and let $Y$ be $(n-1)$-connected. Then there exists a bijection $[K,Y]\leftrightarrow H^n(K;\pi_n(Y))$.

In addition, if $K$ is the suspension of a path-connected CW-complex (or if $Y$ is a loop space), then the groups $[K,Y]$ and $H^n(K;\pi_n(Y))$ are isomorphic.

Moreover, this isomorphism is natural in $K$ and in $Y$.
\end{theo}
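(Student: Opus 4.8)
The plan is to feed the CW-complex $K$ (of dimension $n$) and the $(n-1)$-connected space $Y$ into the Federer spectral sequence of Corollary~\ref{theo_Federer_Spectral_Sequence}, and read off the degree-$0$ information. First I would take $A=K$ in \ref{theo_Federer_Spectral_Sequence}; since $Y$ is $(n-1)$-connected we have $\pi_q(Y)=0$ for $q\leq n-1$, so the only possibly nonzero columns of $E^2_{p,q}$ occur for $q\geq n$, where $E^2_{p,q}\cong H^{-p}(K;\pi_q(Y))$ when $p+q\geq 1$ (and a subgroup thereof when $p+q=0$). Because $\dim K=n$, the cohomology $H^{-p}(K;-)$ vanishes unless $0\leq -p\leq n$, i.e. $-n\leq p\leq 0$. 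Combining these constraints, in total degree $p+q=0$ the only surviving spot is $(p,q)=(-n,n)$, giving $E^2_{-n,n}$ a subgroup of $H^n(K;\pi_n(Y))$. The key point is then to check that this subgroup is in fact all of $H^n(K;\pi_n(Y))$: the obstruction to a class surviving in $E^2$ at the edge $p+q=0$ is the image of $\partial$ from $\pi_2$-level data (see the extended sequences in the proof of \ref{theo_relative_Federer_Spectral_Sequence}), and for the \emph{absolute} version applied to $(CY,Y)$ one has $\pi_1^{A^{n-1}}(CY,Y)\cong\pi_1^{A^{n-1}}(Y)$-type groups that vanish by $(n-1)$-connectivity of $Y$, so $\im\partial=0$ and $E^2_{-n,n}\cong H^n(K;\pi_n(Y))$.

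Next I would analyse the differentials and the convergence. All later differentials $d^a$ into or out of $(-n,n)$ land in spots of total degree $\pm1$; by the dimension and connectivity bounds just invoked, every such spot is zero (the only other nonzero entries sit at $p+q\geq 1$ with $q\geq n$, $-n\leq p\leq 0$, and one checks the bidegree of $d^a$ never connects $(-n,n)$ to a nonzero group — e.g. $E^2_{p,q}$ with $p+q=1$ forces $q=n$, $p=1-n$, but $d^a$ leaving $(-n,n)$ has target $(-n-a,\,n+a-1)$ with $-n-a<-n$, hence zero cohomology). So $(-n,n)$ is a permanent cycle and $E^\infty_{-n,n}\cong E^2_{-n,n}\cong H^n(K;\pi_n(Y))$. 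Since the spectral sequence converges to $\pi_{p+q}^A(Y)$ for $p+q\geq 1$, I must be slightly careful: the target we want, $[K,Y]=\pi_0^K(Y)$, sits in total degree $0$, which is the edge of convergence. Here one uses that the filtration of $\pi_0^K(Y)$ has a single nonzero associated-graded piece, namely $E^\infty_{-n,n}$ — all other filtration quotients vanish by the emptiness of the rest of the antidiagonal $p+q=0$ — so the filtration collapses to a bijection $[K,Y]\leftrightarrow H^n(K;\pi_n(Y))$. I expect the main obstacle to be this last point: making the convergence statement rigorous in total degree $0$, since Corollary~\ref{theo_Federer_Spectral_Sequence} as stated guarantees convergence only for $p+q\geq 1$; one resolves it by going back to the exact couple and noting that the tower defining the degree-$0$ term stabilises after finitely many steps because $A=K$ is finite-dimensional.

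For the group structure: when $K=\Sigma K'$ with $K'$ a path-connected CW-complex, the set $[K,Y]=[\Sigma K',Y]=\pi_1^{K'}(Y)$ carries a natural group structure via the co-H-structure of the suspension, and the Federer spectral sequence in that reindexing is a spectral sequence of groups converging to it, so the bijection above becomes a group isomorphism. Dually, if $Y=\Omega Y'$ is a loop space, $[K,\Omega Y']\cong[\Sigma K,Y']$ inherits a group structure and the same argument applies. Finally, naturality: a cellular map $K\to L$ (or an arbitrary map replaced by a cellular one) and a map $Y\to Y'$ induce morphisms of Federer spectral sequences, as recorded in the naturality discussion following \ref{theo_Federer_Spectral_Sequence}, and these are compatible with the isomorphism $\gamma$ to the cellular cochain complex; passing to $E^\infty$ and to the associated graded of the (one-step) filtration shows the bijection $[K,Y]\leftrightarrow H^n(K;\pi_n(Y))$ is natural in both variables, and an isomorphism of this with the classical Hopf–Whitney correspondence when $K$ is itself a suspension.
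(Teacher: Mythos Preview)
Your main line of argument has a genuine gap. You run the Federer spectral sequence with $A=K$ and try to read off $[K,Y]=\pi_0^K(Y)$ from the antidiagonal $p+q=0$. But Corollary~\ref{theo_Federer_Spectral_Sequence} only asserts convergence for $p+q\geq 1$, and this is not a technicality you can patch by ``going back to the exact couple'': the exact couple in the paper is deliberately extended at the bottom (see the displayed extended sequence in the proof of Theorem~\ref{theo_relative_Federer_Spectral_Sequence}) so that the degree--$1$ relative part, hence the degree--$0$ absolute part, converges to the \emph{trivial} group, as recorded in the Remark following Corollary~\ref{theo_Federer_Spectral_Sequence}. In particular $E^\infty_{-n,n}=0$, not $H^n(K;\pi_n(Y))$. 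Your argument that the ``subgroup'' $E^2_{-n,n}$ equals the full $H^n(K;\pi_n(Y))$ also fails by an index slip: the relevant source of $\partial_n$ is $\pi_2^{K^{n-1}}(CY,Y)\cong\pi_1^{K^{n-1}}(Y)=[\Sigma K^{n-1},Y]$, a group built from maps out of an $n$--dimensional complex into an $(n-1)$--connected space, which has no reason to vanish; what vanishes is $\pi_1^{K^{n-1}}(CY,Y)\cong[K^{n-1},Y]$, one degree lower.

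The paper proceeds differently and avoids degree $0$ altogether. The bare bijection $[K,Y]\leftrightarrow H^n(K;\pi_n(Y))$ is simply quoted as the classical Hopf--Whitney theorem. The spectral sequence is used only for the additional statement: when $K=\Sigma K'$ one takes $A=K'$ (not $A=K$) and works on the line $p+q=1$, where convergence is guaranteed. There the connectivity of $Y$ kills $q\leq n-1$ and $\dim K'=n-1$ kills $-p\geq n$, so $E^2_{-(n-1),n}\cong H^{n-1}(K';\pi_n(Y))\cong H^n(K;\pi_n(Y))$ is the unique nonzero entry on that diagonal and equals $\pi_1^{K'}(Y)=[K,Y]$ as groups. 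You do mention this reindexing in one sentence, and that sentence is essentially the whole proof; the degree--$0$ argument that occupies most of your proposal should be discarded.
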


\begin{proof}
The first part is the Hopf-Whitney theorem (cf. \cite{MT}). The second part can be proved easily by means of the Federer spectral sequence. Concretely, suppose that $K=\Sigma K'$ with $K'$ path-connected. Let $\{E^a_{p,q}\}$ denote the Federer spectral sequence associated to $K'$ and $Y$. Then $E^2_{p,q}=0$ for $q\leq n-1$ since $Y$ is $(n-1)$-connected, and $E^2_{p,q}=0$ for $p\leq -n$ since $\dim K'=n-1$. Hence, $E^2_{-(n-1),n}\cong H^{n-1}(K';\pi_n(Y))$ survives to $E^\infty$. As it is the only nonzero entry in the diagonal $p+q=1$ of $E^2$ it follows that
$$[K,Y]\cong \pi_1^{K'}(Y) \cong E^2_{-(n-1),n} \cong H^{n-1}(K';\pi_n(Y)) \cong H^n(K;\pi_n(Y)).$$

Finally, naturality follows from naturality of the Federer spectral sequence.
\end{proof}

In a similar way, from theorem \ref{theo_relative_Federer_Spectral_Sequence} we obtain the following relative version of the Hopf-Whitney theorem, which not only is interesting for its own sake but also will be important for our purposes.

\begin{theo} \label{relative_Hopf-Whitney}
Let $K$ be the suspension of a path-connected CW-complex of dimension $n-1\geq 1$ and let $(Y,B)$ be an $n$-connected topological pair. Then there exists an isomorphism of groups $$[(\cn K,K);(Y,B)]\leftrightarrow H^n(K;\pi_{n+1}(Y,B)).$$
which is natural in $K$ and in $(Y,B)$.
\end{theo}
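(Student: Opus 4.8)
The plan is to repeat, \emph{mutatis mutandis}, the argument used above for the absolute Hopf--Whitney theorem, but now feeding in the relative spectral sequence of Theorem~\ref{theo_relative_Federer_Spectral_Sequence} instead of Corollary~\ref{theo_Federer_Spectral_Sequence}. Write $K=\Sigma K'$ with $K'$ a path-connected CW-complex of dimension $n-1$. Since $(Y,B)$ is $n$-connected and $n\geq 2$, the group $\pi_2(Y,B)$ vanishes, hence in particular it is abelian and Theorem~\ref{theo_relative_Federer_Spectral_Sequence} applies to $K'$ and $(Y,B)$, producing a spectral sequence $\{E^a_{p,q}\}$ converging to $\pi^{K'}_{p+q}(Y,B)$ for $p+q\geq 2$. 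By the very definition of the relative $A$-homotopy groups one has $[(\cn K,K);(Y,B)]=[(\cn\Sigma K',\Sigma K');(Y,B)]=\pi^{K'}_2(Y,B)$, so it is enough to identify $\pi^{K'}_2(Y,B)$ with $H^{n-1}(K';\pi_{n+1}(Y,B))$ and then invoke the suspension isomorphism $H^{n-1}(K';\pi_{n+1}(Y,B))\cong H^n(\Sigma K';\pi_{n+1}(Y,B))=H^n(K;\pi_{n+1}(Y,B))$.

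Next I would locate the relevant entries of the $E^2$-page. Because $\dim K'=n-1$ we have $E^2_{p,q}=0$ whenever $-p\geq n$, i.e.\ $p\leq -n$, and because $(Y,B)$ is $n$-connected we have $\pi_q(Y,B)=0$ for $q\leq n$, so $E^2_{p,q}=0$ whenever $q\leq n$. On the diagonal $p+q=2$ with $p\leq -1$, Theorem~\ref{theo_relative_Federer_Spectral_Sequence} gives $E^2_{p,q}\cong H^{-p}(K';\pi_q(Y,B))$, and the two vanishing ranges together leave exactly one possibly nonzero term there, namely $E^2_{-(n-1),\,n+1}\cong H^{n-1}(K';\pi_{n+1}(Y,B))$.

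Then I would check that this term is a permanent cycle. For $a\geq 2$ the differential leaving $E^a_{-(n-1),n+1}$ has target in bidegree $(-(n-1)-a,\,n+a)$, whose first coordinate is $\leq -n$, so the target is $0$; and the differential entering $E^a_{-(n-1),n+1}$ has source in bidegree $(-(n-1)+a,\,n+2-a)$, whose second coordinate is $\leq n$, so the source is $0$. Hence $E^2_{-(n-1),n+1}=E^\infty_{-(n-1),n+1}$, and as this is the only nonzero entry on the diagonal $p+q=2$, the convergence statement of Theorem~\ref{theo_relative_Federer_Spectral_Sequence} yields a group isomorphism $\pi^{K'}_2(Y,B)\cong E^\infty_{-(n-1),n+1}\cong H^{n-1}(K';\pi_{n+1}(Y,B))$. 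Combining this with the two identifications of the first paragraph gives the desired isomorphism of groups $[(\cn K,K);(Y,B)]\cong H^n(K;\pi_{n+1}(Y,B))$. Finally, naturality in $(Y,B)$ is immediate from the naturality of the relative Federer spectral sequence in the pair, and naturality in $K$ follows from its naturality in the CW-complex $K'$, together with compatibility with the suspension isomorphism and the remark, made after Corollary~\ref{theo_Federer_Spectral_Sequence}, that from the second page on the relevant morphism of spectral sequences is induced by the map itself.

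I expect the points needing most care to be, first, the verification that $E^2_{-(n-1),n+1}$ is genuinely the unique surviving term on the diagonal $p+q=2$ and survives to $E^\infty$ (this is the conceptual heart, although it reduces to the elementary bidegree bookkeeping above), and second, the naturality in $K$: a map between suspensions need not itself be a suspension, so one must either argue through the cellular-cochain description of the second page (where functoriality in $K'$ and compatibility with the suspension isomorphism are transparent) or appeal directly to an explicit obstruction-theoretic description of the bijection. Everything else is routine bookkeeping with the spectral sequence.
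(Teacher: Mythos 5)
Your proposal is correct and follows essentially the same route as the paper: apply the relative Federer spectral sequence to $K'$ and $(Y,B)$, use the vanishing forced by $\dim K'=n-1$ and $n$-connectedness of the pair to isolate $E^2_{-(n-1),n+1}\cong H^{n-1}(K';\pi_{n+1}(Y,B))$ as the only surviving term on the diagonal $p+q=2$, and conclude via the suspension isomorphism, with naturality coming from naturality of the spectral sequence. Your additional checks (that $\pi_2(Y,B)=0$ makes the abelianness hypothesis automatic, and the explicit bidegree bookkeeping for the differentials) are fine refinements of the same argument.
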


\begin{proof}
Suppose that $K=\Sigma K'$ with $K'$ path-connected. Let $\{E^a_{p,q}\}$ denote the relative Federer spectral sequence associated to $K'$ and $(Y,B)$. Then $E^2_{p,q}=0$ for $q\leq n$ since $(Y,B)$ is $n$-connected, and $E^2_{p,q}=0$ for $p\leq -n$ since $\dim K'=n-1$. Hence, $E^2_{-(n-1),n+1}=H^{n-1}(K';\pi_{n+1}(Y,B))$ survives to $E^\infty$. As it is the only nonzero entry in the diagonal $p+q=2$ of $E^2$ it follows that
\begin{displaymath}
\begin{array}{rcl}
[(CK,K);(Y,B)] & = & \pi_2^{K'}(Y,B) \cong E^2_{-(n-1),n+1} \cong H^{n-1}(K';\pi_{n+1}(Y,B)) \cong \\ & \cong & H^n(K;\pi_{n+1}(Y,B)).
\end{array}
\end{displaymath}
and naturality follows again from naturality of the Federer spectral sequence.
\end{proof}

We will give now another application of \ref{theo_Federer_Spectral_Sequence}. We will denote by $\mathcal{T}_{\mathcal{P}}$ the class of torsion abelian groups whose elements have orders which are divisible only by primes in a set $\mathcal{P}$ of prime numbers.

\begin{prop} \label{A-homotopy}
Let $A$ be a finite-dimensional CW-complex such that $H_n(A)$ is finitely generated for all $n\in\N$ and let $X$ be a path-connected topological space such that $\pi_1(X)$ is abelian. If $H_n(A)\in \mathcal{T}_{\mathcal{P}}$ for all $n\in\mathbb{N}$ then $\pi_n^A(X) \in \mathcal{T}_{\mathcal{P}}$ for all $n\in\mathbb{N}$.
\end{prop}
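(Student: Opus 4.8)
The plan is to read $\pi^A_n(X)$ off the Federer spectral sequence $\{E^a_{p,q}\}$ of Corollary~\ref{theo_Federer_Spectral_Sequence} associated to $A$ and $X$, which converges to $\pi^A_{p+q}(X)$ for $p+q\ge 1$. Thus for each $n\ge 1$ the group $\pi^A_n(X)$ carries a finite filtration whose successive quotients are the terms $E^\infty_{p,\,n-p}$, each of which is a subquotient of $E^2_{p,\,n-p}$. Since $\mathcal{T}_{\mathcal{P}}$ is a Serre class — closed under subgroups, quotients, finite direct sums, and extensions, all of which are elementary to verify (for extensions: if $0\to G'\to G\to G''\to 0$ with $G',G''\in\mathcal{T}_{\mathcal{P}}$, an element of $G$ whose image has order $m$ and whose $m$-th multiple has order $k$ has order dividing $mk$) — it will suffice to prove that every $E^2_{p,q}$ lies in $\mathcal{T}_{\mathcal{P}}$ and that the filtration of $\pi^A_n(X)$ is finite.

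The heart of the matter is to show $E^2_{p,q}\in\mathcal{T}_{\mathcal{P}}$. For $p\ge 0$ or $p+q<0$ this group is zero; in the remaining range $p\le -1$, $p+q\ge 0$, Corollary~\ref{theo_Federer_Spectral_Sequence} tells us $E^2_{p,q}$ is isomorphic to a subgroup of $H^{-p}(A;\pi_q(X))$, so it is enough to place the latter in $\mathcal{T}_{\mathcal{P}}$. For this I would invoke the universal coefficient theorem, which gives a short exact sequence
$$0\longrightarrow \ext\big(H_{-p-1}(A),\pi_q(X)\big)\longrightarrow H^{-p}(A;\pi_q(X))\longrightarrow \hom\big(H_{-p}(A),\pi_q(X)\big)\longrightarrow 0.$$
Because $-p\ge 1$ and we work with reduced homology, both $H_{-p}(A)$ and $H_{-p-1}(A)$ are finitely generated groups in $\mathcal{T}_{\mathcal{P}}$ (with $H_0(A)=0$, so the $\ext$-term vanishes when $p=-1$). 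Writing such a group as $\bigoplus_{i=1}^{k}\Z/n_i$ with each $n_i$ a product of primes in $\mathcal{P}$, and using $\hom(\Z/n,G)\cong G[n]$ and $\ext(\Z/n,G)\cong G/nG$ — each of exponent dividing $n$ — one sees that $\hom(H_{-p}(A),\pi_q(X))$ and $\ext(H_{-p-1}(A),\pi_q(X))$ are finite direct sums of abelian groups of bounded exponent whose exponents involve only primes in $\mathcal{P}$. Hence both lie in $\mathcal{T}_{\mathcal{P}}$, and therefore so does $H^{-p}(A;\pi_q(X))$ and its subgroup $E^2_{p,q}$. Note that nothing is required of the coefficient groups $\pi_q(X)$ beyond their being abelian — automatic for $q\ge 2$, hypothesized for $q=1$.

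To conclude, each $E^\infty_{p,q}$ is a subquotient of $E^2_{p,q}$, hence in $\mathcal{T}_{\mathcal{P}}$; and since $A$ is finite-dimensional, $E^\infty_{p,q}=0$ unless $-\dim A\le p\le -1$, so for each fixed $n\ge 1$ only finitely many of the $E^\infty_{p,\,n-p}$ are nonzero. The finite filtration of $\pi^A_n(X)$ supplied by the convergence of the spectral sequence then exhibits it as an iterated extension of groups in $\mathcal{T}_{\mathcal{P}}$, whence $\pi^A_n(X)\in\mathcal{T}_{\mathcal{P}}$. The step I expect to require the most care is the computation of the $E^2$-page: the finite generation of the $H_n(A)$ is used essentially (a summand $\bigoplus_{m}\Z/p^m$ of unbounded exponent would already make $\hom(-,\pi_q(X))$ non-torsion in general), and one must keep the reduced-homology conventions straight so that the $H_0$-terms drop out and the universal coefficient sequence genuinely involves only $H_{-p}(A)$ and $H_{-p-1}(A)$. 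The boundary line $p+q=0$ of the spectral sequence does not converge to a homotopy group, but this is harmless since only $n\ge 1$ is at issue (and for $n=1$ one simply uses that $\pi_1^A(X)=[\Sigma A,X]$ is a group, so the filtration argument still applies).
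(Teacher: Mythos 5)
Your proposal is correct and follows essentially the same route as the paper: apply the Federer spectral sequence of Corollary \ref{theo_Federer_Spectral_Sequence}, reduce via the finite filtration to showing each $E^2_{p,q}$, hence $H^{-p}(A;\pi_q(X))$, lies in $\mathcal{T}_{\mathcal{P}}$, and settle that with the universal coefficient theorem and the finite generation of the $H_n(A)$. The only difference is that you spell out the closure properties of $\mathcal{T}_{\mathcal{P}}$ and the $\hom$/$\ext$ computations that the paper leaves implicit.
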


\begin{proof}
By \ref{theo_Federer_Spectral_Sequence}, it suffices to prove that $H^{-p}(A;\pi_q(X))\in \mathcal{T}_{\mathcal{P}}$ for all $p,q\in \mathbb{Z}$ such that $p+q\geq 0$ and $p\leq -1$.

By the universal coefficient theorem $$H^{-p}(A;\pi_q(X))\cong \hom (H_{-p}(A),\pi_q(X)) \oplus \ext (H_{-p-1}(A),\pi_q(X)).$$ Since $A$ is $\mathcal{T}_{\mathcal{P}}$-acyclic and $H_n(A)$ is finitely generated for all $n\in\N$ it follows that $\hom (H_{-p}(A),\pi_q(X))\in \mathcal{T}_{\mathcal{P}}$ and $\ext (H_{-p-1}(A),\pi_q(X))\in \mathcal{T}_{\mathcal{P}}$ for all $p\leq -1$ and $q\geq 0$. Thus, $H^{-p}(A;\pi_q(X)) \in \mathcal{T}_{\mathcal{P}}$ for all $p\leq -1$ and $q\geq 0$.
\end{proof}

\section{$A$-homology} \label{section_A_homology}

In this section we define an `\emph{$A$-shaped}' reduced homology theory, which we call $A$-homology and which coincides with the singular homology theory in case $A=S^0$. Our definition enables us to obtain generalizations of several classical results. For example, we prove a Hurewicz-type theorem (\ref{Hurewicz_for_A-homology}) relating the $A$-homotopy groups with the $A$-homology groups.

We also give a homological version of the Whitehead theorem which states that, under certain hypotheses, a map between CW-complexes which induces isomorphisms in the $A$-homology groups is a homotopy equivalence (\ref{A-homology_isomorphism}).

Finally, we define a Hurewicz-type map between the $A$-homotopy groups and the $A$-homology groups and embed it in a long exact sequence \ref{A-Whitehead_exact_sequence} generalizing the Whitehead exact sequence \cite{Whi}.

\medskip

We begin with a simple remark which will be used later.

\begin{rem}
Let $p:(E,e_0)\to (B,b_0)$ be a quasifibration, let $F=p^{-1}(b_0)$ and let $A$ be a CW-complex. Since $p$ induces isomorphisms $p_\ast:\pi_i(E,F,e_0)\to \pi_i(B,b_0)$ for all $i\in\mathbb{N}$ and $\pi_i(E,F,e_0)\cong \pi_{i-1}(P(E,e_0,F),c_{e_0})$ and $\pi_i(B,b_0)\cong \pi_{i-1}(\Omega B,c_{b_0})$ it follows that the induced map $\hat{p}:(P(E,e_0,F),c_{e_0})\to (\Omega B,c_{b_0})$ is a weak equivalence. Thus, $\hat{p}$ induces isomorphisms $\hat{p}_\ast:\pi_i^A(P(E,e_0,F),c_{e_0})\to \pi_i^A(\Omega B,c_{b_0})$.

Since $\pi_i^A(E,F,e_0)\cong \pi_{i-1}^A(P(E,e_0,F),c_{e_0})$ and $\pi_i^A(B,b_0)\cong \pi_{i-1}^A(\Omega B,c_{b_0})$ we obtain that $p_\ast:\pi_i^A(E,F,e_0)\to \pi_i^A(B,b_0)$ is an isomorphism for all $i\in\mathbb{N}$.
\end{rem}

Our definition of $A$-homology groups is inspired by the Dold-Thom theorem.

\begin{definition}
Let $A$ be a CW-complex and let $X$ be a topological space. For $n\in\mathbb{N}_0$ we define the \emph{$n$-th $A$-homology group of $X$} as $$H_n^A(X)=\pi_n^A(SP(X))$$
where $SP(X)$ denotes the infinite symmetric product of $X$.
\end{definition}

\begin{theo}
The functor $H_\ast^A(\_)$ defines a reduced homology theory on the category of (path-connected) CW-complexes.
\end{theo}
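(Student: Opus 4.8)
The plan is to verify the Eilenberg–Steenrod axioms for reduced homology theories (homotopy invariance, exactness, excision, additivity/wedge, and the dimension axiom in the appropriate $A$-shaped form) for the functor $X \mapsto \pi_\ast^A(SP(X))$ on the category of path-connected CW-complexes. The key input is the Dold–Thom theorem, which says that $\pi_n(SP(X)) \cong \tilde H_n(X;\Z)$ and, more importantly for us, that $SP(\_)$ turns a cofibration sequence $B \hookrightarrow X \to X/B$ of CW-pairs into a quasifibration $SP(B) \to SP(X) \to SP(X/B)$ with fiber (weakly equivalent to) $SP(B)$. This last fact, combined with the Remark immediately preceding the statement, is exactly what produces the long exact sequence.

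First I would check homotopy invariance: $SP(\_)$ is a homotopy functor (a homotopy $X \times I \to Y$ induces $SP(X) \times I \to SP(X\times I) \to SP(Y)$), so homotopic maps $f \simeq g : X \to Y$ induce homotopic maps on symmetric products, hence equal maps $\pi_n^A(SP(X)) \to \pi_n^A(SP(Y))$; this gives well-definedness on $[\_,\_]$ and functoriality. Second, the exact sequence: given a CW-pair $(X,B)$, apply Dold–Thom to get the quasifibration $p: SP(X) \to SP(X/B)$ with fiber $SP(B)$. By the Remark, $p$ induces isomorphisms $\pi_i^A(SP(X), SP(B)) \xrightarrow{\sim} \pi_i^A(SP(X/B))$ for all $i$. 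Splicing these isomorphisms into the long exact sequence of the pair $(SP(X), SP(B))$ in $A$-homotopy — which exists because $A$-homotopy groups of a pair fit in a long exact sequence exactly as ordinary homotopy groups do (this is a general fact about $[\Sigma^n A, \_]$ and should be available from the earlier sections, or proved directly from the Puppe sequence) — yields the long exact sequence
\begin{displaymath}
\cdots \to H_n^A(B) \to H_n^A(X) \to H_n^A(X/B) \to H_{n-1}^A(B) \to \cdots
\end{displaymath}
which is the exactness axiom in reduced form. Third, excision for CW-pairs is subsumed by this: for a CW-triad or a quotient, $H_\ast^A(X,B) := H_\ast^A(X/B)$ depends only on the quotient, so excision holds automatically in the reduced/quotient formulation. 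Fourth, additivity: $SP(\bigvee_\alpha X_\alpha) \cong \prod_\alpha SP(X_\alpha)$ (the weak product, which for symmetric products of a wedge is the direct-limit-of-finite-products and is what one wants), and $\pi_n^A$ of a (weak) product is the product of the $\pi_n^A$'s since $\Sigma^n A$ is a (compact, if $A$ is finite) CW-complex — more precisely $[\Sigma^n A, \prod_\alpha Z_\alpha] = \prod_\alpha [\Sigma^n A, Z_\alpha]$ — giving $H_n^A(\bigvee_\alpha X_\alpha) \cong \bigoplus_\alpha H_n^A(X_\alpha)$ (the image landing in the weak product). Finally, one should record the coefficient computation: $H_n^A(S^0) = \pi_n^A(SP(S^0)) = \pi_n^A(SP(S^1)\text{-ish})$; more concretely $SP(S^0)$ is (homotopy equivalent to) a point's... — rather, $H_n^{S^0}(X) = \pi_n(SP(X)) = \tilde H_n(X;\Z)$ recovers singular homology, and in general $H_n^A(S^m) = \pi_n^A(SP(S^m)) = \pi_n^A(K(\Z,m)) = H^{m-n}(A;\Z)$ by the Federer-type computation, which plays the role of the dimension axiom.

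The main obstacle is the exactness axiom, and specifically ensuring that the quasifibration $SP(X) \to SP(X/B)$ genuinely has the homotopy-theoretic properties needed to invoke the Remark — i.e. that it is a quasifibration in the precise sense used there, so that $p_\ast : \pi_i(SP(X), SP(B)) \to \pi_i(SP(X/B))$ is an isomorphism, allowing the passage to path/loop spaces and then to $A$-homotopy groups. This requires the full strength of the Dold–Thom theorem for CW-pairs (the quasifibration statement, not merely the homology identification), together with care that $SP(B) \hookrightarrow SP(X)$ is the actual fiber inclusion up to weak equivalence. A secondary technical point is the long exact sequence in relative $A$-homotopy groups: one must know that $\pi_n^A(E,F) \cong \pi_{n-1}^A(P(E,F))$ and that the fibration-sequence/Puppe machinery produces the expected long exact sequence for the functor $[\Sigma^n A, \_]$; this is routine but should be cited or sketched. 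Everything else — homotopy invariance, additivity, naturality of connecting maps — is formal once $SP$ is recognized as a homotopy functor commuting suitably with wedges.
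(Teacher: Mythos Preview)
Your core argument matches the paper's: both use that $SP$ is a homotopy functor and invoke the Dold--Thom quasifibration $SP(X)\to SP(X/B)$ together with the preceding Remark to obtain the long exact sequence in $A$-homotopy. That is the heart of the proof in each case.

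The packaging differs, however. The paper checks exactly three things beyond the long exact sequence: the suspension isomorphism $H_n^A(X)\cong H_{n+1}^A(\Sigma X)$ (obtained by applying the long exact sequence to the pair $(\cn X,X)$ and using $H_n^A(\cn X)=0$), and that $H_0^A$ and $H_1^A$ take values in abelian groups (deduced from the suspension isomorphism). You instead run through an Eilenberg--Steenrod checklist. Some of your extra items are harmless (excision is vacuous in the reduced formulation, and the ``dimension'' computation is not part of the axioms for a generalized reduced theory), but you omit the explicit suspension isomorphism and the abelian-ness verification that the paper singles out. More substantively, your additivity argument requires $\Sigma^n A$ compact so that maps from it into a weak product factor through a finite subproduct; the theorem as stated places no finiteness hypothesis on $A$, and indeed the paper defers the wedge axiom to a separate proposition proved only for \emph{finite} $A$. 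So either drop additivity from your list of axioms or note the extra hypothesis. Your coefficient computation is also a bit garbled ($SP(S^0)$ is not what you want; one should look at $H_n^A(S^m)=\pi_n^A(K(\Z,m))$ directly), but since this is not needed for the theorem it does no harm.
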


\begin{proof}
It is clear that $H_\ast^A(\_)$ is a homotopy functor. If $(X,B,x_0)$ is a pointed CW-pair, then by the Dold-Thom theorem, 
the quotient map $q:X\to X/B$ induces a quasifibration $\hat{q}:SP(X)\to SP(X/B)$ whose fiber is homotopy equivalent to $SP(B)$. Since $A$ is a CW-complex there is a long exact sequence
\begin{displaymath}
\xymatrix@C=20pt{\ldots \ar[r] & \pi_n^A(SP(B)) \ar[r] & \pi_n^A(SP(X)) \ar[r] & \pi_n^A(SP(X/B)) \ar[r] & \pi_{n-1}^A(SP(B)) \ar[r] & \ldots}
\end{displaymath}

It remains to show that there exists a natural isomorphism $H_n^A(X)\cong H_{n+1}^A(\Sigma X)$ and that $H_n^A(X)$ are abelian groups for $n=0,1$. The natural isomorphism follows from the long exact sequence of above applied to the CW-pair $(\cn X,X)$. Note that $H_n^A(\cn X)=0$ since $\cn X$ is contractible and $H_n^A$ is a homotopy functor. The second part follows immediately, since $H_0^A(X)\cong H_{1}^A(\Sigma X)\cong H_{2}^A(\Sigma^2 X)$. The group structure on $H_0^A(X)$ is induced from the one on $H_1^A(X)$ by the corresponding natural isomorphism.
\end{proof}

The proof above encourages us to define the relative $A$-homology groups of a CW-pair $(X,B)$ by $H^A_n(X,B)=\pi^A_n(SP(X/B))$ for $n\geq 1$. As shown before, there exist long exact sequences of $A$-homology groups associated to a CW-pair $(X,B)$.

Federer's spectral sequence can be applied as a first method of computation of $A$-homology groups. Indeed, given a finite CW-complex $A$ and a CW-complex $X$, the associated Federer spectral sequence $\{E^a_{p,q}\}$ converges to the $A$-homotopy groups of $SP(X)$ (note that $\pi_1(SP(X))$ is abelian). In this case we obtain that $E^2_{p,q}=H^{-p}(A,\pi_q(SP(X)))=H^{-p}(A,H_q(X))$ if $p+q\geq 1$ and $p\leq -1$. Moreover, we will show later a explicit formula to compute $A$-homology groups of CW-complexes.

We exhibit now some examples.

\begin{ex} \label{ex_1}
If $A$ is a finite-dimensional CW-complex and $X$ is a Moore space of type $(G,n)$ then $SP(X)$ is an Eilenberg-Mac Lane space of the same type. Hence, by the Federer spectral sequence
$$H_r^A(X)=\pi_r^A(SP(X))=H^{n-r}(A,\pi_n(SP(X)))=H^{n-r}(A,G) \qquad \textrm{for $r\geq 1$}.$$
In particular, $H_r^A(S^n)=H^{n-r}(A,\Z)$.

We also deduce that if $X$ is a Moore space of type $(G,n)$ and $A$ is $(n-1)$-connected, then $H_r^A(X)=0$ for all $r\geq 1$.
\end{ex}

\begin{ex} \label{ex_2}
Let $A$ be a Moore space of type $(G,m)$ (with $G$ finitely generated) and let $X$ be a path-connected CW-complex. As in example \ref{ex_A_homotopy}, for $n\geq 1$, there are short exact sequences of abelian groups
\begin{displaymath}
\xymatrix{0 \ar[r] & \ext(G,H_{n+m+1}(X)) \ar[r] & H_n^A(X) \ar[r] & \hom(G,H_{n+m}(X)) \ar[r] & 0}
\end{displaymath}

As a consequence, if $G$ is a finite group of exponent $r$, then $\alpha^{2r}=0$ for every $\alpha\in H_n^A(X)$.

\end{ex}

It is well known that if a CW-complex does not have cells of a certain dimension $j$, then its $j$-th homology group vanishes. As one should expect, a similar result holds for the $A$-homology groups. Concretely, if $A$ is an $l$-connected CW-complex of dimension $k$ and $X$ is a CW-complex, applying the Federer spectral sequence to the space $SP(X)$ one can obtain that:
\begin{enumerate}
\item If $\dim(X)=m$, then $H_r^A(X)=0$ for $r\geq m-l$.
\item If $X$ does not have cells of dimension less than $m'$, then $H_r^A(X)=0$ for $r\leq m'+l-k$.
\end{enumerate}

Following the idea of example \ref{ex_1} we will show now a explicit formula to compute $A$-homology groups.

\begin{prop}
Let $A$ be a finite-dimensional CW-complex and let $X$ be a connected CW-complex. Then for every $n\in\N_0$, $\displaystyle H^A_n(X)=\bigoplus_{j \in \mathbb{N}} H^{j-n} (A,H_j(X))$.
\end{prop}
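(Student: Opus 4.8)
The plan is to reduce the computation to the Federer spectral sequence attached to $A$ and $SP(X)$, and show that it collapses and that all extension problems are trivial. As noted just before the statement, since $\pi_1(SP(X))$ is abelian, Corollary \ref{theo_Federer_Spectral_Sequence} applies and gives a spectral sequence with
$$E^2_{p,q}\cong H^{-p}(A;\pi_q(SP(X)))=H^{-p}(A;H_q(X))\qquad\text{for }p+q\geq 1,\ p\leq -1,$$
converging to $\pi^A_{p+q}(SP(X))=H^A_{p+q}(X)$. So the whole content is: (i) the differentials $d^a$ vanish for $a\geq 2$, and (ii) the resulting filtration of $H^A_n(X)$ splits naturally.

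First I would establish the collapse. The key structural input is that $SP(X)$ is a product of Eilenberg--Mac\,Lane spaces: by the Dold--Thom theorem $SP(X)\simeq \prod_{j\in\N} K(H_j(X),j)$, and this splitting is functorial enough for our purposes (it comes from the fact that $SP$ of a Moore space is a $K(G,n)$, together with the homology decomposition of $X$; alternatively one invokes that a connected topological abelian monoid is weakly equivalent to a product of $K(\pi_j,j)$'s). Naturality of the Federer spectral sequence (established in the paragraph following \ref{theo_relative_Federer_Spectral_Sequence}) then lets me decompose the entire spectral sequence as a direct sum, over $j$, of the Federer spectral sequences of $A$ and $K(H_j(X),j)$. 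For a single $K(G,j)$ one has $\pi_q(K(G,j))=G$ if $q=j$ and $0$ otherwise, so the $E^2$-page of that summand is concentrated on the single row $q=j$: $E^2_{p,j}\cong H^{-p}(A;G)$ and $E^2_{p,q}=0$ for $q\neq j$. Since every differential $d^a$ ($a\geq 2$) changes $q$ by $a-1\geq 1$, it must be zero; hence each summand collapses at $E^2$, and therefore so does the whole spectral sequence.

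Next, the extension problem. With $E^2=E^\infty$, the group $H^A_n(X)=\pi^A_n(SP(X))$ carries a finite filtration whose associated graded is $\bigoplus_{p+q=n}E^\infty_{p,q}=\bigoplus_{p\leq -1}H^{-p}(A;H_{n-p}(X))=\bigoplus_{j\geq n+1}H^{j-n}(A;H_j(X))$; note that the $j=n$ term $H^0(A;H_n(X))$ would sit on the line $p+q=n$ with $p=0$, where $E^2=0$, so it does not appear here, but we still want it in the final formula — this is exactly the kind of bookkeeping I will have to check (the reduced-cohomology conventions mean $H^0$ and the basepoint cell behave slightly differently, and the edge term on $p+q=1$ is only a subgroup; I expect the product decomposition of $SP(X)$ to handle this cleanly, giving $H^A_n(X)\cong\bigoplus_j H^{j-n}(A;H_j(X))$ on the nose once one is careful that the $j<n$ terms vanish because $A$ has no cohomology in negative degrees and that the $j=n$ term is $H^0(A;H_n(X))$, i.e.\ the free part coming from the basepoint component). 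The splitting itself is immediate once the product decomposition is in hand: $\pi^A_n(\prod_j K(H_j(X),j))\cong\prod_j\pi^A_n(K(H_j(X),j))$ because $A$ is a finite-dimensional CW-complex and hence compact, so $[\Sigma^nA,-]$ commutes with the product (and the product is finite in each relevant range, or one passes to the weak product), and each factor $\pi^A_n(K(H_j(X),j))\cong H^{j-n}(A;H_j(X))$ by the computation in Example \ref{ex_1}.

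The main obstacle I anticipate is not the spectral-sequence collapse but pinning down the functorial product decomposition of $SP(X)$ with enough precision to (a) split the filtration compatibly and (b) get the indexing exactly right, including the appearance of the $H^0(A;H_n(X))$ summand and the disappearance of all negative-degree terms. Concretely I would: build the decomposition $SP(X)\simeq\prod_j K(H_j(X),j)$ using the Moore-space/$SP$ identity of Example \ref{ex_1} applied to a homology decomposition of $X$ (or cite the classical structure theorem for topological abelian monoids), verify it is natural in $X$, apply $\pi^A_n(-)=[\Sigma^nA,-]$ and use compactness of $\Sigma^n A$ to pull it through the product, and finally substitute Example \ref{ex_1}'s formula $H^A_n(K(G,j))=\pi^A_n(K(G,j))=H^{j-n}(A;G)$ — which is $0$ for $j<n$ and $H^0(A;G)\cong G$ for $j=n$ — to obtain the stated $H^A_n(X)=\bigoplus_{j\in\N}H^{j-n}(A;H_j(X))$.
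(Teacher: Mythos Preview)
Your approach is essentially the paper's: decompose $SP(X)$ as a product of Eilenberg--Mac\,Lane spaces, compute $\pi^A_n$ of each factor via the Federer spectral sequence (this is exactly Example~\ref{ex_1}), and sum. The paper's proof is three lines long because it goes straight to this product decomposition and skips your detour through the collapse of the spectral sequence for $SP(X)$ itself --- once you have the product splitting, there is no extension problem to solve, so that discussion is superfluous.

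Two corrections. First, ``$A$ is a finite-dimensional CW-complex and hence compact'' is false (an infinite wedge of circles is one-dimensional and not compact). Fortunately compactness is irrelevant: $[\Sigma^n A,\prod_j Y_j]\cong \prod_j[\Sigma^n A,Y_j]$ holds for \emph{any} $A$ by the universal property of products, and the passage from $\prod_j$ to $\bigoplus_j$ is justified instead by finite-dimensionality of $A$, which forces $H^{j-n}(A;H_j(X))=0$ for $j>n+\dim A$, so only finitely many terms survive. Second, your worry about the $j=n$ term is misplaced: the paper works with reduced cohomology throughout and assumes all spaces path-connected, so $H^0(A;H_n(X))=0$, not $H_n(X)$; there is no missing summand and no bookkeeping issue at $p=0$.
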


\begin{proof}
Since $SP(X)$ has the weak homotopy type of $\displaystyle \prod_{n \in \mathbb{N}} K(H_n(X),n)$ and $A$ is a CW-complex we obtain that
\begin{displaymath}
\begin{array}{lcl}
H^A_n(X) & = & \displaystyle \pi^A_n(SP(X)) \cong \pi^A_n(\prod_{j \in \mathbb{N}} K(H_j(X),j)) \cong \prod_{j \in \mathbb{N}} \pi^A_n( K(H_j(X),j)) \cong \\ & \cong & \displaystyle  \prod_{j \in \mathbb{N}} H^{j-n} (A,H_j(X)) = \bigoplus_{j \in \mathbb{N}} H^{j-n} (A,H_j(X))
\end{array}
\end{displaymath}
where the last isomorphism follows from the Federer spectral sequence.
\end{proof}

Now we show that, in case $A$ is compact, $H_\ast^A$ satisfies the wedge axiom. This can be proved in two different ways: using the definition of $A$-homotopy groups or using the above formula. We choose the first one.

\begin{prop}
Let $A$ be a finite CW-complex, and let $\{X_i\}_{i\in I}$ be a collection of CW-complexes. Then $$H_n^A\left(\bigvee_{i\in I} X_i \right)=\bigoplus_{i\in I}H_n^A(X_i).$$
\end{prop}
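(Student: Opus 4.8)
The plan is to use the definition $H_n^A(X) = \pi_n^A(SP(X))$ together with the Dold--Thom theorem, which gives $SP\left(\bigvee_{i\in I} X_i\right) \cong \prod_{i\in I} SP(X_i)$ (the infinite symmetric product turns wedges into weak products, and since the $X_i$ are well-pointed this is the weak product, i.e.\ the colimit of finite sub-products). So the statement reduces to showing
\begin{displaymath}
\pi_n^A\Big(\prod_{i\in I}^{w} SP(X_i)\Big) \cong \bigoplus_{i\in I} \pi_n^A(SP(X_i)),
\end{displaymath}
where $\prod^w$ denotes the weak product. The two things to check are: (a) $\pi_n^A$ of an ordinary (possibly infinite) product is the product of the $\pi_n^A$'s, which is immediate since $[\Sigma^n A, \prod_i Y_i] = \prod_i [\Sigma^n A, Y_i]$; and (b) passing from the full product to the weak product replaces the product of homotopy groups by the direct sum. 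Point (b) is where compactness of $A$ enters.

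First I would recall that $\Sigma^n A$ is compact because $A$ is a finite CW-complex, hence $\Sigma^n A$ is finite. Then any map $\Sigma^n A \to \prod_i^w SP(X_i)$ has image in a finite sub-product $\prod_{i\in F} SP(X_i) \times \prod_{i\notin F}\{\ast\}$, because the image is compact and the weak product is the colimit of the finite sub-products; more carefully, a compact subset of the weak product meets only finitely many of the "coordinate axes" nontrivially. The same applies to homotopies $\Sigma^n A \times I \to \prod_i^w SP(X_i)$, since $\Sigma^n A \times I$ is again compact. Therefore the natural map $\col_{F} \pi_n^A(\prod_{i\in F} SP(X_i)) \to \pi_n^A(\prod_i^w SP(X_i))$ over the directed set of finite subsets $F\subseteq I$ is an isomorphism, and $\col_F \prod_{i\in F}\pi_n^A(SP(X_i)) = \bigoplus_{i\in I}\pi_n^A(SP(X_i))$.

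Combining: $H_n^A\left(\bigvee_i X_i\right) = \pi_n^A\left(SP(\bigvee_i X_i)\right) \cong \pi_n^A\left(\prod_i^w SP(X_i)\right) \cong \bigoplus_i \pi_n^A(SP(X_i)) = \bigoplus_i H_n^A(X_i)$. I would also note the identification is via the wedge inclusions $X_i \hookrightarrow \bigvee_j X_j$, so it is the expected natural map, consistent with $H_\ast^A$ being a homology theory.

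The main obstacle is the careful justification that a compact set in the weak product $\prod_i^w SP(X_i)$ is contained in a finite sub-product --- i.e.\ that the compactly-supported maps behave as expected and that the topology on the weak product (as a colimit of finite products inside the full product, or equivalently the subspace of the full product of tuples almost all of whose entries are the basepoint) interacts correctly with maps out of compact spaces. This is standard, but it is the one place where finiteness of $A$ is genuinely used and should be stated explicitly; for infinite $A$ the statement would fail (one would only get the full product). Everything else --- the Dold--Thom reduction and the hom-set/product interchange --- is formal.
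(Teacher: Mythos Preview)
Your proof is correct and follows essentially the same approach as the paper: identify $SP(\bigvee_i X_i)$ with the weak product $\prod_i^w SP(X_i)$ and then use compactness of $\Sigma^n A$ (and of $\Sigma^n A \times I$) to conclude that $\pi_n^A$ of the weak product is the direct sum. The paper's proof is terser, simply asserting the last step from compactness of $A$, whereas you spell out the colimit argument explicitly.
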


\begin{proof}
The space $SP(\underset{i \in I}{\bigvee} X_i)$ is homeomorphic to $\underset{i \in I}{\prod}^w SP(X_i)$ with the weak product topology, i.e. $\underset{i \in I}{\prod}^w SP(X_i)$ is the colimit of the products of finitely many factors. Since $A$ is compact, $\pi_n^A(\underset{i \in I}{\prod}^w SP(X_i))\cong \underset{i \in I}{\bigoplus} \pi_n^A(SP(X_i))$ and the result follows.
\end{proof}

We will prove now some of the main results of this article. We begin with a Hurewicz-type theorem relating the $A$-homology groups with the $A$-homotopy groups.

\begin{theo} \label{relative_Hurewicz_for_A-homology}
Let $A$ be the suspension of a path-connected CW-complex of dimension $k-1\geq 1$ and let $(X,B)$ be an $n$-connected CW-pair with $n\geq k$. Suppose, in addition, that $B$ is simply-connected and non-empty. Then $H_r^A(X,B)=0$ for $r\leq n-k$ and $\pi_{n-k+1}^A(X,B)\cong H_{n-k+1}^A(X,B)$.
\end{theo}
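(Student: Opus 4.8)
The natural strategy is to reduce the relative Hurewicz-type statement for $A$-homology to the corresponding statement for $A$-homotopy groups by exploiting the Dold--Thom quasifibration, and then to compute those $A$-homotopy groups via the relative Federer spectral sequence (Corollary/Theorem~\ref{theo_relative_Federer_Spectral_Sequence}). Concretely, since $H^A_r(X,B)=\pi^A_r(SP(X/B))$ by definition, and since the Dold--Thom theorem identifies $\pi_j(SP(X/B))$ with $H_j(X/B)\cong H_j(X,B)$, the hypothesis that $(X,B)$ is $n$-connected (with $B$ simply-connected and non-empty, so that $X/B$ is a well-behaved simply-connected CW-complex) gives that $SP(X/B)$ is an $n$-connected space whose homotopy groups are $\pi_j(SP(X/B))=H_j(X,B)$.

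Next I would set $A=\Sigma A'$ with $A'$ path-connected of dimension $k-1$, and apply the (absolute) Federer spectral sequence associated to $A'$ and the space $Y:=SP(X/B)$ — whose fundamental group is abelian, so the hypotheses of Corollary~\ref{theo_Federer_Spectral_Sequence} hold. One has $E^2_{p,q}\cong H^{-p}(A';\pi_q(Y))=H^{-p}(A';H_q(X,B))$ for $p+q\geq 1$, $p\leq-1$, and this vanishes unless $q\geq n+1$ (since $Y$ is $n$-connected) and unless $p\geq -(k-1)$ (since $\dim A'=k-1$). Therefore the only possibly nonzero $E^2$-entries sit in the region $-(k-1)\leq p\leq -1$, $q\geq n+1$; in particular the diagonals $p+q=r$ with $r\leq n-k+1$ contribute nothing except possibly the single corner entry $E^2_{-(k-1),\,n+1}$ on the diagonal $p+q=2+(n-k)=n-k+2$... — I must be careful here: translating through $H^A_r(X,B)=\pi^A_r(SP(X/B))=\pi^{A'}_{r+1}(SP(X/B))$ (using $\pi^A_r=\pi^{\Sigma A'}_r=[\Sigma^r\Sigma A',-]=[\Sigma^{r+1}A',-]=\pi^{A'}_{r+1}$), the group $H^A_r(X,B)$ is computed by the diagonal $p+q=r+1$. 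For $r\leq n-k$ that diagonal $p+q\leq n-k+1$ lies entirely in the vanishing region, whence $H^A_r(X,B)=0$; for $r=n-k+1$ the diagonal $p+q=n-k+2$ meets the nonzero region only at $(p,q)=(-(k-1),\,n+1)$, i.e.\ $E^2_{-(k-1),n+1}\cong H^{k-1}(A';H_{n+1}(X,B))$, and since it is the unique nonzero entry on its diagonal it survives to $E^\infty$ (no incoming or outgoing differentials can hit it, all neighbouring diagonals being zero in the relevant range), giving $\pi^{A'}_{n-k+2}(SP(X/B))\cong E^2_{-(k-1),n+1}\cong H^{k-1}(A';H_{n+1}(X,B))$, i.e.\ $H^A_{n-k+1}(X,B)\cong H^{k-1}(A';H_{n+1}(X,B))$.

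Finally I would identify the right-hand side with $\pi^A_{n-k+1}(X,B)$. By the relative version of the Federer spectral sequence applied directly to the CW-pair $(X,B)$ (which is $n$-connected with $\pi_2(X,B)$ abelian — here one uses $B$ simply-connected, so that $\pi_2(X,B)$ is abelian, and $n\geq k\geq 2$ so the low-dimensional hypotheses are harmless), the same diagonal argument shows $\pi^A_{n-k+1}(X,B)=\pi^{A'}_{n-k+2}(X,B)\cong E^2_{-(k-1),\,n+1}\cong H^{k-1}(A';\pi_{n+1}(X,B))$, and $\pi_{n+1}(X,B)\cong H_{n+1}(X,B)$ by the classical relative Hurewicz theorem (using that $(X,B)$ is $n$-connected, $n\geq 2$, and $B$ simply-connected). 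Comparing the two computations yields $\pi^A_{n-k+1}(X,B)\cong H^A_{n-k+1}(X,B)$.

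\textbf{Main obstacle.} The delicate points are bookkeeping rather than conceptual: (i) getting the index shifts right between $\pi^A_r$, $\pi^{A'}_{r+1}$, and the spectral-sequence diagonal $p+q$, and correctly locating the ``corner'' entry; and (ii) verifying that this corner entry genuinely survives to $E^\infty$ — one must check that every differential into or out of $E^{a}_{-(k-1),\,n+1}$ has source or target on an adjacent diagonal that is entirely zero in the pertinent range, so that no extension problems or cancellations intervene. A secondary point requiring care is ensuring the connectivity and $\pi_1$/$\pi_2$-abelian hypotheses needed to invoke both the absolute Federer spectral sequence for $SP(X/B)$ and the relative one for $(X,B)$, and to invoke the classical relative Hurewicz isomorphism $\pi_{n+1}(X,B)\cong H_{n+1}(X,B)$; this is exactly where the assumption that $B$ is simply-connected and non-empty is used.
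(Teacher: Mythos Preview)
Your argument is correct and follows essentially the same route as the paper. The only difference is packaging: the paper first isolates the ``corner entry survives'' computation as the absolute and relative Hopf--Whitney theorems (Theorems~\ref{Hopf-Whitney} and~\ref{relative_Hopf-Whitney}) and then cites those, whereas you inline that spectral-sequence argument directly; in both cases one ends up identifying each side with $H^{k-1}(A';\pi_{n+1}(X,B))\cong H^{k-1}(A';H_{n+1}(X,B))$ via the classical relative Hurewicz isomorphism.
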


\begin{proof}
By the Hurewicz theorem, $H_r(X,B)=0$ for $r\leq n$ and $H_{n+1}(X,B)\cong\pi_{n+1}(X,B)$. Since $(X,B)$ is a CW-pair, by the Dold-Thom theorem we obtain that $\pi_r(SP(X/B))\cong H_r(X/B) \cong H_r(X,B) = 0$ for $r\leq n$.

Since $A$ is a CW-complex of dimension $k \leq n$, then, $H^A_r(X,B)=\pi^A_r(SP(X/B))=0$ for $r\leq n-k$. Also,
\begin{displaymath}
\begin{array}{lcl}
\pi_{n-k+1}^A(X,B) & = &  [(\cn\Sigma^{n-k}A,\Sigma^{n-k}A);(X,B)] \cong H^{n}(\Sigma^{n-k}A,\pi_{n+1}(X,B)) \cong \\ & \cong & H^{n}(\Sigma^{n-k}A,H_{n+1}(X,B)) \cong H^{n+1}(\Sigma^{n-k+1}A,\pi_{n+1}(SP(X/B))) \cong \\ & \cong & [\Sigma^{n-k+1}A,SP(X/B)] = \pi_{n-k+1}^A(SP(X/B)) = H_{n-k+1}^A(X,B)
\end{array}
\end{displaymath}
where the first and fourth isomorphisms hold by \ref{relative_Hopf-Whitney} and \ref{Hopf-Whitney} respectively.
\end{proof}

Moreover, by naturality of \ref{Hopf-Whitney} and \ref{relative_Hopf-Whitney} it follows that the isomorphism above is the morphism induced in $\pi_n^A$ by the map which is the composition of the quotient map $(X,B)\to(X/B,\ast)$ with the inclusion map $(X/B,\ast) \to (SP(X/B),\ast)$.

\medskip

Clearly, from this relative $A$-Hurewicz theorem we can deduce the following absolute version.

\begin{theo} \label{Hurewicz_for_A-homology}
Let $A$ be the suspension of a path-connected CW-complex with $\dim A = k\geq 2$ and let $X$ be an $n$-connected CW-complex with $n\geq k$. Then $H_r^A(X)=0$ for $r\leq n-k$ and $\pi_{n-k+1}^A(X)\cong H_{n-k+1}^A(X)$.

Moreover, the morphism $i_\ast:\pi_{n-k+1}^A(X)\to \pi_{n-k+1}^A(SP(X))=H_{n-k+1}^A(X)$ induced by the inclusion map $i:X\to SP(X)$ is an isomorphism.
\end{theo}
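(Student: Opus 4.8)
The plan is to deduce both assertions from the relative $A$-Hurewicz theorem \ref{relative_Hurewicz_for_A-homology}, applied to the CW-pair $(\cn X,X)$ (with the same $A$). Note that $\cn X$ is contractible, that $\cn X/X=\Sigma X$, and that $X$ is simply-connected and non-empty, being $n$-connected with $n\geq k\geq 2$. From the homotopy long exact sequence of the pair one gets $\pi_r(\cn X,X)\cong\pi_{r-1}(X)$, so $(\cn X,X)$ is $(n+1)$-connected; since $n+1\geq k$, \ref{relative_Hurewicz_for_A-homology} applies with $n+1$ in place of $n$, giving $H^A_r(\cn X,X)=0$ for $r\leq n-k+1$ and a relative $A$-Hurewicz isomorphism $\psi\colon\pi^A_{n-k+2}(\cn X,X)\xrightarrow{\ \sim\ }H^A_{n-k+2}(\cn X,X)$.

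I would then feed this into the two long exact sequences attached to $(\cn X,X)$: the $A$-homotopy long exact sequence of the pair, and the $A$-homology long exact sequence, which by construction is the homotopy long exact sequence of the Dold-Thom quasifibration $\hat q\colon SP(\cn X)\to SP(\Sigma X)$ with fibre $SP(X)$. Since $\cn X$, and hence $SP(\cn X)$, are contractible, $\pi^A_r(\cn X)=0$ and $H^A_r(\cn X)=\pi^A_r(SP(\cn X))=0$, so the respective connecting homomorphisms are isomorphisms $\partial_\pi\colon\pi^A_{r+1}(\cn X,X)\xrightarrow{\ \sim\ }\pi^A_r(X)$ and $\partial_H\colon H^A_{r+1}(\cn X,X)\xrightarrow{\ \sim\ }H^A_r(X)$ in the relevant degrees. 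Hence $H^A_r(X)\cong H^A_{r+1}(\cn X,X)=0$ for $r\leq n-k$ (this also follows directly, since by Dold-Thom $SP(X)$ is $n$-connected while $\dim\Sigma^rA=r+k\leq n$), and $\varphi:=\partial_H\circ\psi\circ\partial_\pi^{-1}\colon\pi^A_{n-k+1}(X)\to H^A_{n-k+1}(X)$ is an isomorphism; this proves the first part.

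It remains to identify $\varphi$ with $i_\ast$. The inclusion $i\colon X\to SP(X)$ is the restriction to $X\subseteq\cn X$ of the analogous inclusion $\cn X\to SP(\cn X)$, which carries $X$ into the fibre $SP(X)\subseteq SP(\cn X)$ of $\hat q$; this yields a map of CW-pairs $\tilde{\imath}\colon(\cn X,X)\to(SP(\cn X),SP(X))$ restricting to $i$ on $X$. Three observations then suffice. First, by the remark following \ref{relative_Hurewicz_for_A-homology} the isomorphism $\psi$ is induced by the composite $(\cn X,X)\xrightarrow{\ q\ }(\Sigma X,\ast)\to(SP(\Sigma X),\ast)$; since this composite agrees with $\hat q\circ\tilde{\imath}$ (an immediate check on one-point configurations), we get $\psi=\hat q_\ast\circ\tilde{\imath}_\ast$, where $\hat q_\ast\colon\pi^A_{n-k+2}(SP(\cn X),SP(X))\to\pi^A_{n-k+2}(SP(\Sigma X))=H^A_{n-k+2}(\cn X,X)$ is the isomorphism provided by the remark at the beginning of this section. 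Second, the $A$-homology long exact sequence of $(\cn X,X)$ is by definition the homotopy sequence of the quasifibration $\hat q$, and by the standard relation between the homotopy sequence of a (quasi)fibration and that of the corresponding pair, $\partial_H\circ\hat q_\ast=\partial_{\mathrm{pair}}$, the connecting homomorphism of the pair $(SP(\cn X),SP(X))$. Third, connecting homomorphisms of pairs are natural, so $i_\ast\circ\partial_\pi=\partial_{\mathrm{pair}}\circ\tilde{\imath}_\ast$. Combining these, for every $x\in\pi^A_{n-k+2}(\cn X,X)$ one computes $\varphi(\partial_\pi x)=\partial_H(\psi x)=\partial_H(\hat q_\ast\tilde{\imath}_\ast x)=\partial_{\mathrm{pair}}(\tilde{\imath}_\ast x)=i_\ast(\partial_\pi x)$, and since $\partial_\pi$ is surjective this forces $\varphi=i_\ast$; in particular $i_\ast$ is an isomorphism.

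The routine ingredients here --- the two long exact sequences, and the one-line identification of $\hat q\circ\tilde{\imath}$ with the quotient-then-include map --- present no difficulty. The step I expect to be the main obstacle is the second observation above: pinning down that the connecting homomorphism of the pair $(SP(\cn X),SP(X))$ corresponds, under the isomorphism $\hat q_\ast$ of the opening remark of the section, to the connecting homomorphism of the Dold-Thom quasifibration $\hat q$, and hence to that of the $A$-homology long exact sequence. Once the $A$-homology long exact sequence is taken, as in its definition, to be the homotopy sequence of that quasifibration, this is the usual compatibility between the two exact sequences, but it must be recorded carefully so that the final diagram chase is legitimate.
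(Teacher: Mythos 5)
Your argument is correct, but it is a longer route than the one the paper intends. The paper's deduction of the absolute statement from Theorem \ref{relative_Hurewicz_for_A-homology} is simply the specialization $B=\ast$: the pair $(X,\ast)$ is $n$-connected, $\ast$ is simply-connected and non-empty, $\pi_r^A(X,\ast)\cong\pi_r^A(X)$, $H_r^A(X,\ast)=\pi_r^A(SP(X/\ast))=H_r^A(X)$, and the map ``quotient then include'' of the remark following the relative theorem is literally $i\colon X\to SP(X)$, so both the vanishing statement and the identification of the isomorphism with $i_\ast$ are immediate. You instead apply the relative theorem one degree up, to $(\cn X,X)$, and then desuspend through the $A$-homotopy and $A$-homology long exact sequences of that pair; this is why you are forced into the extra bookkeeping you flag, namely that the connecting map of the $A$-homology sequence is $\partial_{\mathrm{pair}}\circ\hat q_\ast^{-1}$ for the Dold--Thom quasifibration (which does hold by the paper's construction of that sequence via the opening remark of Section 3, the fibre of $\hat q$ over the basepoint being exactly $SP(X)$ here) together with naturality of $\partial_{\mathrm{pair}}$ under $\tilde\imath$. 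Everything in your chase checks out, including the identification $\hat q\circ\tilde\imath=\mathrm{incl}\circ q$ and the use of the naturality remark to express $\psi$ as $\hat q_\ast\circ\tilde\imath_\ast$, so the proof is valid; it just re-proves, in effect, the suspension/boundary compatibilities that the $B=\ast$ specialization avoids. Note also that the paper later gives a second, independent proof (Theorem \ref{Hurewicz_for_A-homology_2}) via the fibre $\Gamma X$ of $i$, which in addition yields that $i_\ast$ is an epimorphism in degree $n-k+2$ and does not need $A$ to be a suspension.
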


Thus, the morphism $i_\ast:\pi_n^A(X)\to \pi_n^A(SP(X))$ can be thought as a Hurewicz-type map and will be called \emph{$A$-Hurewicz homomorphism}. Not only is it natural, but also it can be embedded in a long exact sequence, as we will show later (\ref{A-Whitehead_exact_sequence}).

\medskip

We give now a homological version of the Whitehead theorem, which states that, under certain hypotheses, a continuous map between CW-complexes inducing isomorphisms in $A$-homotopy groups is a homotopy equivalence.

\begin{theo} \label{A-homology_isomorphism}
Let $A'$ be a path-connected and locally compact CW-complex of dimension $k-1\geq 0$ such that $H_{k-1}(A')\neq 0$ and let $A=\Sigma A'$. Let $f:X\to Y$ be a continuous map between simply-connected CW-complexes which induces isomorphisms $f_\ast:H^A_r(X)\to H^A_r(Y)$ for all $r\in \N$ and $f_\ast:\pi_i(X)\to \pi_i(Y)$ for all $i\leq k+1$. Then $f$ is a homotopy equivalence.
\end{theo}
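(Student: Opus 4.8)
The plan is to reduce to the classical Whitehead theorem: since $X$ and $Y$ are CW-complexes it is enough to show that $f$ induces isomorphisms on all homotopy groups. First I would note that, as homology is reduced and $A'$ is path-connected, $H_0(A')=0$, so the hypothesis $H_{k-1}(A')\neq 0$ forces $k\geq 2$. After replacing $f$ by a homotopic cellular map I would pass to the mapping cylinder, so that $X$ is a subcomplex of a CW-complex $M_f$ which deformation retracts onto $Y$ (both simply-connected), with $f$ corresponding to the composite of the inclusion $X\hookrightarrow M_f$ and the retraction. It then suffices to prove $\pi_i(M_f,X)=0$ for all $i$. From the homotopy long exact sequence of $(M_f,X)$ and the hypothesis that $f_\ast$ is an isomorphism on $\pi_i$ for $i\leq k+1$, the pair $(M_f,X)$ is $(k+1)$-connected; this is the base case of an induction on connectivity.

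Next I would show the pair is $A$-acyclic. Because $X\hookrightarrow M_f$ followed by the retraction $M_f\to Y$ is $f$ and $H^A_\ast$ is a homotopy functor, the hypothesis that $f_\ast\colon H^A_r(X)\to H^A_r(Y)$ is an isomorphism for all $r\in\N$ implies that $H^A_r(X)\to H^A_r(M_f)$ is an isomorphism for all $r\in\N$; feeding this into the $A$-homology long exact sequence of the CW-pair $(M_f,X)$ gives $H^A_r(M_f,X)=0$ for every $r\geq 2$, which is all that will be needed.

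For the inductive step, suppose $(M_f,X)$ is $n$-connected with $n\geq k+1$. Since $X$ is simply-connected, non-empty and $n\geq k$, the relative $A$-Hurewicz theorem \ref{relative_Hurewicz_for_A-homology} gives $\pi^A_{n-k+1}(M_f,X)\cong H^A_{n-k+1}(M_f,X)=0$ (note $n-k+1\geq 2$). By definition $\pi^A_{n-k+1}(M_f,X)=[(\cn\Sigma^{n-k}A,\Sigma^{n-k}A);(M_f,X)]$, and $\Sigma^{n-k}A=\Sigma(\Sigma^{n-k}A')$ is the suspension of the path-connected CW-complex $\Sigma^{n-k}A'$ of dimension $n-1\geq 1$ (using $n-k\geq 1$), so the relative Hopf-Whitney theorem \ref{relative_Hopf-Whitney}, applied to the $n$-connected pair $(M_f,X)$, identifies this group with $H^n(\Sigma^{n-k}A;\pi_{n+1}(M_f,X))$. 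Hence $H^n(\Sigma^{n-k}A;\pi_{n+1}(M_f,X))=0$. Now $\Sigma^{n-k}A$ is an $n$-dimensional complex with $H_n(\Sigma^{n-k}A)\cong H_{k-1}(A')$ by the iterated suspension isomorphism, and this group is free abelian and non-zero, so the universal coefficient theorem forces $\hom(H_{k-1}(A'),\pi_{n+1}(M_f,X))=0$ and therefore $\pi_{n+1}(M_f,X)=0$ (the group is abelian since $n+1\geq 4$). Thus $(M_f,X)$ is $(n+1)$-connected, and by induction $\pi_i(M_f,X)=0$ for all $i$; so $f$ is a weak equivalence and hence a homotopy equivalence of CW-complexes.

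The routine parts are the mapping-cylinder reduction and the long exact sequence manipulations. The delicate points, and where I would be most careful, are: checking at each stage of the induction that $\Sigma^{n-k}A$ really is a suspension of a path-connected complex of dimension exactly $n-1$ and that $n-k+1\geq 1$, so that $H^A_{n-k+1}(M_f,X)$ is defined and \ref{relative_Hurewicz_for_A-homology} and \ref{relative_Hopf-Whitney} genuinely apply; and, above all, the last step in which the vanishing of $H^n(\Sigma^{n-k}A;\pi_{n+1}(M_f,X))$ is promoted to the vanishing of $\pi_{n+1}(M_f,X)$ — this is exactly where the hypothesis $H_{k-1}(A')\neq 0$, together with the fact that the top cellular homology of $A'$ is free abelian, is indispensable.
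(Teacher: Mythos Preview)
Your proof is correct and follows essentially the same route as the paper: mapping-cylinder reduction, induction on the connectivity of the pair, relative $A$-Hurewicz (\ref{relative_Hurewicz_for_A-homology}) to get $\pi^A_{n-k+1}(M_f,X)=0$, relative Hopf--Whitney (\ref{relative_Hopf-Whitney}) to translate this into $H^n(\Sigma^{n-k}A;\pi_{n+1}(M_f,X))=0$, and finally the universal coefficient theorem together with the fact that the top homology $H_{k-1}(A')$ is free abelian and non-zero to conclude $\pi_{n+1}(M_f,X)=0$. You are in fact slightly more explicit than the paper in two places: you spell out why $k\geq 2$ (needed for \ref{relative_Hurewicz_for_A-homology}) and you justify $H^A_{n-k+1}(M_f,X)=0$ via the long exact sequence in $A$-homology, which the paper asserts without comment.
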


\begin{proof}
Replacing $Y$ by the mapping cylinder of $f$, we may suppose that $f$ is an inclusion map and hence $(Y,X)$ is $(k+1)$-connected. We will prove by induction that $(Y,X)$ is $n$-connected for all $n\in\mathbb{N}$.

Suppose that $(Y,X)$ is $n$-connected for some $n\geq k+1$. Then 
$$\pi_{n-k}^A(Y,X)=[(\cn \Sigma^{n-k-1}A,\Sigma^{n-k-1}A),(Y,X)]=0$$
because $\dim (\cn \Sigma^{n-k-1}A)=n$. Moreover, by \ref{relative_Hurewicz_for_A-homology} we obtain that
$$\pi_{n-k+1}^A(Y,X)\cong H_{n-k+1}^A(Y,X)=0.$$

Now, by \ref{relative_Hopf-Whitney},
\begin{displaymath}
\begin{array}{lcl}
0 & = & \pi_{n-k+1}^A(Y,X)=H^{n}(\Sigma^{n-k} A,\pi_{n+1}(Y,X)) = H^{k}(A,\pi_{n+1}(Y,X))= \\ & = & \hom(H_{k}(A),\pi_{n+1}(Y,X))\oplus \ext(H_{k-1}(A),\pi_{n+1}(Y,X)).
\end{array}
\end{displaymath}

Then $\hom(H_k(A),\pi_{n+1}(Y,X))=0$. By the hypotheses on $A'$ it follows that $H_{k-1}(A')=H_k(A)$ has $\mathbb{Z}$ as a direct summand. Hence, $\pi_{n+1}(Y,X)=0$ and thus $(Y,X)$ is $(n+1)$-connected.

In consequence, $(Y,X)$ is $n$-connected for all $n\in\mathbb{N}$. Then the inclusion map $f:X\to Y$ is a weak equivalence and since $X$ and $Y$ are CW-complexes it follows that $f$ is a homotopy equivalence.
\end{proof}

To finish, we will make use of a modern construction of the exact sequence Whitehead introduced in \cite{Whi} to embed the $A$-Hurewicz homomorphism defined above in a long exact sequence. As a corollary we will obtain another proof of theorem \ref{Hurewicz_for_A-homology} together with an extension of it. A different way to obtain the Whitehead exact sequence is given in \cite{BH}.

Let $X$ be a CW-complex and let $\Gamma X$ be the homotopy fiber of the inclusion $i:X\to SP(X)$. Hence, there is a long exact sequence
\begin{displaymath}
\xymatrix{\ldots \ar[r] & \pi_n^A(\Gamma X) \ar[r] & \pi_n^A(X) \ar[r]^(.42){i_\ast} & \pi_n^A(SP(X)) \ar[r] & \pi_{n-1}^A(\Gamma X) \ar[r] & \ldots }
\end{displaymath}
and by definition $\pi_n^A(SP(X))=H_n^A(X)$ and $i_\ast$ is the $A$-Hurewicz homomorphism.

Thus, we have proved the following.
\begin{prop} \label{A-Whitehead_exact_sequence}
Let $A$ and $X$ be CW-complexes. Then, there is a long exact sequence
\begin{displaymath}
\xymatrix{\ldots \ar[r] & \pi_n^A(\Gamma X) \ar[r] & \pi_n^A(X) \ar[r]^(.5){i_\ast} & H_n^A(X) \ar[r] & \pi_{n-1}^A(\Gamma X) \ar[r] & \ldots }
\end{displaymath}
\end{prop}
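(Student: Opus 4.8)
The plan is to realize this sequence as the long exact sequence in $A$-homotopy groups attached to the homotopy fiber sequence $\Gamma X \to X \xrightarrow{i} SP(X)$; for $A = S^{0}$ this is exactly the construction of Whitehead's classical exact sequence from the same fibration.

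First I would extend $\Gamma X \to X \xrightarrow{i} SP(X)$ to a Puppe fiber sequence
$$\cdots \longrightarrow \Omega X \longrightarrow \Omega\,SP(X) \xrightarrow{\partial} \Gamma X \longrightarrow X \xrightarrow{i} SP(X),$$
in which every three consecutive maps form a homotopy fiber sequence (obtained by iterating the formation of homotopy fibers and using that the homotopy fiber of $\operatorname{hofib}(g)\to Y$ is $\Omega Z$ for a map $g\colon Y\to Z$). Then I would apply the functors $[\Sigma^{m}A,-]$, $m\ge 0$, to this sequence: for any pointed space $W$ and any map $g\colon Y\to Z$ the sequence of pointed sets $[W,\operatorname{hofib}(g)]\to[W,Y]\to[W,Z]$ is exact, and splicing these three-term exact sequences along the whole Puppe sequence, using the loop--suspension adjunction $[\Sigma^{m}A,\Omega Z]\cong[\Sigma^{m+1}A,Z]=\pi^{A}_{m+1}(Z)$ to re-index, yields a single long exact sequence
$$\cdots \to \pi_{n}^{A}(\Gamma X) \to \pi_{n}^{A}(X) \xrightarrow{i_{\ast}} \pi_{n}^{A}(SP(X)) \xrightarrow{\partial_{\ast}} \pi_{n-1}^{A}(\Gamma X) \to \cdots$$
whose connecting map is induced by $\partial$. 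Finally I would substitute $\pi_{n}^{A}(SP(X))=H_{n}^{A}(X)$, which is the definition of the $A$-homology groups, and observe that the arrow $\pi_{n}^{A}(X)\to\pi_{n}^{A}(SP(X))$ produced above is post-composition with $i$, that is, the $A$-Hurewicz homomorphism; this gives precisely the asserted sequence. (Alternatively, one may replace $i$ by a fibration through the path-space construction and combine the long exact sequence of the resulting pair in $A$-homotopy with the remark opening this section, which identifies the relative groups with $\pi^{A}_{n}(SP(X))$.)

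I do not expect a substantial obstacle here: the whole content lies in the homotopy-fiber-sequence formalism together with the definition of $H_{\ast}^{A}$, which is why the statement is recorded merely as a summary of the preceding discussion. The closest things to difficulties are two points deserving a word of care. First, $\Gamma X$ is again a pointed, path-connected space, since $\pi_{0}(\Gamma X)$ is the cokernel of the Hurewicz (abelianization) map $\pi_{1}(X)\to\pi_{1}(SP(X))=H_{1}(X)$ and hence trivial. Second, in low degrees some terms are only pointed sets, or possibly non-abelian groups; there the sequence is exact in the usual sense attached to a fibration, and it consists of abelian groups and homomorphisms in all degrees $\ge 2$, since $\Sigma^{m}A$ is then a double suspension, exactly as in the original Whitehead sequence.
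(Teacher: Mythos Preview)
Your proposal is correct and follows the same approach as the paper: the sequence is obtained as the long exact sequence in $A$-homotopy of the homotopy fiber sequence $\Gamma X \to X \xrightarrow{i} SP(X)$, followed by the identification $\pi_n^A(SP(X))=H_n^A(X)$. The paper records this in a single sentence before the proposition, while you spell out the Puppe-sequence mechanism and add the remarks on path-connectedness of $\Gamma X$ and the low-degree behaviour, but the underlying argument is identical.
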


Using this long exact sequence we will give another proof of theorem \ref{Hurewicz_for_A-homology}. Recall that in \cite{Whi}, given a CW-complex $Z$, Whitehead defines the group $\Gamma_n(Z)$ as the kernel of the canonical morphism $\pi_n(Z^n)\to \pi_n(Z^n,Z^{n-1})$ which by exactness coincides with the image of the morphism $j_\ast:\pi_n(Z^{n-1})\to \pi_n(Z^n)$, where $j:Z^{n-1}\to Z^n$ is the inclusion map. It is known that $\Gamma_n(Z)\cong\pi_n(\Gamma Z)$.

Now let $A$ be a CW-complex of dimension $k\geq 2$ and let $X$ be an $n$-connected topological space with $n\geq k$. Replacing $X$ by a homotopy equivalent CW-complex $Y$ with $Y^n=\ast$, it follows that $\Gamma_r(X)=0$ for $r\leq n+1$. Hence, $\Gamma X$ is $(n+1)$-connected.

Therefore, $\pi_r^A(\Gamma X)=0$ for $r\leq n-k+1$. Thus, from the exact sequence above we obtain that $i_\ast:\pi_{n-k+1}^A(X)\to H_{n-k+1}^A(X)$ is an isomorphism. Moreover, $i_\ast:\pi_{n-k+2}^A(X)\to H_{n-k+2}^A(X)$ is an epimorphism.

Summing up, we have proved the following.

\begin{theo} \label{Hurewicz_for_A-homology_2}
Let $A$ be a path-connected CW-complex with $\dim A = k\geq 2$ and let $X$ be an $n$-connected CW-complex with $n\geq k$. Let $i:X\to SP(X)$ be the inclusion map. Then $H_r^A(X)=0$ for $r\leq n-k$, $i_\ast:\pi_{n-k+1}^A(X)\to H_{n-k+1}^A(X)$ is an isomorphism and $i_\ast:\pi_{n-k+2}^A(X)\to H_{n-k+2}^A(X)$ is an epimorphism.
\end{theo}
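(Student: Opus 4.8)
The plan is to deduce Theorem~\ref{Hurewicz_for_A-homology_2} directly from the long exact sequence of Proposition~\ref{A-Whitehead_exact_sequence} together with a connectivity estimate on the homotopy fiber $\Gamma X$. First I would reduce to a convenient model for $X$: since $X$ is $n$-connected and a CW-complex, replace it up to homotopy equivalence by a CW-complex $Y$ with $Y^n = \ast$ (one cell in dimension $0$ and no cells in dimensions $1,\dots,n$). This is legitimate because $H_\ast^A(\_)$ and $\pi_\ast^A(\_)$ are homotopy functors, and $\Gamma(\_)$ is invariant under homotopy equivalence. With this model, I would invoke Whitehead's description of $\Gamma_r(Y) = \pi_r(\Gamma Y)$ as (the image of $\pi_r(Y^{r-1}) \to \pi_r(Y^r)$, hence) a subquotient built from the skeleta of $Y$; since $Y^r = \ast$ for $r \leq n$, this forces $\Gamma_r(Y) = 0$ for $r \leq n+1$. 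Consequently $\Gamma X \simeq \Gamma Y$ is $(n+1)$-connected.

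Next I would feed this connectivity bound into the $A$-homotopy groups of $\Gamma X$. Here the key elementary fact is that if $Z$ is $m$-connected then $\pi_r^A(Z) = [\Sigma^r A, Z] = 0$ whenever $\Sigma^r A$ has dimension at most $m$, i.e. whenever $r + \dim A \leq m$; equivalently $\pi_r^A(Z) = 0$ for $r \leq m - k$ with $k = \dim A$. Applying this with $m = n+1$ gives $\pi_r^A(\Gamma X) = 0$ for $r \leq n - k + 1$. (Note this step only uses that $A$ is a $k$-dimensional CW-complex, not that it is a suspension — which is why the hypotheses here are weaker than in Theorem~\ref{Hurewicz_for_A-homology}.)

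Finally I would read off the conclusion from the exact sequence
\begin{displaymath}
\xymatrix{\ldots \ar[r] & \pi_n^A(\Gamma X) \ar[r] & \pi_n^A(X) \ar[r]^(.5){i_\ast} & H_n^A(X) \ar[r] & \pi_{n-1}^A(\Gamma X) \ar[r] & \ldots }
\end{displaymath}
For $r \leq n-k$: the terms $\pi_r^A(X)$ and $\pi_{r-1}^A(\Gamma X)$ surrounding $H_r^A(X)$ both vanish ($\pi_r^A(X) = [\Sigma^r A, X] = 0$ since $\dim \Sigma^r A = r + k \leq n$ and $X$ is $n$-connected; $\pi_{r-1}^A(\Gamma X) = 0$ since $r - 1 \leq n - k$), so $H_r^A(X) = 0$. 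For $r = n-k+1$: the flanking groups $\pi_{n-k+1}^A(\Gamma X)$ and $\pi_{n-k}^A(\Gamma X)$ both vanish by the previous paragraph, so $i_\ast$ is an isomorphism. For $r = n-k+2$: we have $\pi_{n-k+1}^A(\Gamma X) = 0$, so the map $H_{n-k+2}^A(X) \to \pi_{n-k+1}^A(\Gamma X) = 0$ shows $i_\ast : \pi_{n-k+2}^A(X) \to H_{n-k+2}^A(X)$ is an epimorphism. The main obstacle — really the only non-formal point — is justifying that $\Gamma X$ is $(n+1)$-connected via the $\Gamma_n$ construction and its identification $\Gamma_n(Z) \cong \pi_n(\Gamma Z)$; once that connectivity is in hand the rest is a diagram chase through the long exact sequence.
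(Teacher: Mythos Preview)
Your proposal is correct and follows essentially the same route as the paper: replace $X$ by a CW-complex with trivial $n$-skeleton, use Whitehead's identification $\Gamma_r(X)\cong\pi_r(\Gamma X)$ to conclude that $\Gamma X$ is $(n+1)$-connected, deduce $\pi_r^A(\Gamma X)=0$ for $r\leq n-k+1$ from the dimension bound on $A$, and then read off the three conclusions from the long exact sequence of Proposition~\ref{A-Whitehead_exact_sequence}. Your write-up is in fact slightly more explicit than the paper's, spelling out the vanishing of $H_r^A(X)$ for $r\leq n-k$ directly from the exact sequence and noting why the suspension hypothesis on $A$ is not needed here.
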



\begin{thebibliography}{99}


\bibitem{Bar1} Barratt, M. \textit{Track groups I}. Proc. London Math. Soc. (3)  5 (1955), 71--106.

\bibitem{Bar2} Barratt, M. \textit{Track groups II}. Proc. London Math. Soc. (3) 5 (1955), 285--329.

\bibitem{Br} Brown, R. \textit{On K\"unneth suspensions}. Proc. Cambridge Philos. Soc. 60 (1964) 713--720

\bibitem{BH} Brown, R. and Higgins, P. \textit{Colimit theorems for relative homotopy groups}. J. Pure Appl. Algebra 22 (1981) 11-41.

\bibitem{Fed} Federer, H. \textit{A study of function spaces by spectral sequences.} Trans. Amer. Math. Soc. 82 (1956), 340--361.

\bibitem{MT} Mosher, R. and Tangora, M. \textit{Cohomology operations and applications in homotopy theory}. Harper \& Row, Publishers. 1968.

\bibitem{Nei} Neisendorfer, J. A. \textit{Primary homotopy theory}. Memoirs A.M.S. 232. Amer. Math. Soc. 1980.

\bibitem{Pet} Peterson, F. P. \textit{Generalized cohomotopy groups}. Amer. Jour. Math. 78 (1956), 259--281.

\bibitem{Qui} Quillen, D. \textit{Homotopical Algebra}. Lecture Notes in Mathematics. Vol. 43. Springer. 1967.

\bibitem{Whi} Whitehead, J. H. C. \textit{A certain exact sequence}. Annals of Mathematics 52 (1950) 51--110.

\bibitem{Whn} Whitney, H. \textit{Relations between the second and third homotopy groups of a simply-connected space}. Annals of Mathematics 50 (1949), 180--202.

\end{thebibliography}
\end{document}